\newtheorem{lemma}{Lemma}
\newtheorem{corollary}{Corollary}
\newtheorem{proposition}{Proposition}
\newtheorem{remark}{Remark}
\newtheorem{assumption}{Assumption}
\begin{document}

\title[]{Random sets from the perspective of metric statistics}
\thanks{The work of D. K. is partially supported by JSPS KAKENHI Grant Number 23K12456} 
\date{\today}

\author[D. Kurisu]{Daisuke Kurisu}
\author[Y. Okamoto]{Yuta Okamoto}
\author[T. Otsu]{Taisuke Otsu}

\address[D. Kurisu]{Center for Spatial Information Science, The University of Tokyo\\
5-1-5, Kashiwanoha, Kashiwa-shi, Chiba 277-8568, Japan.}
\email{daisukekurisu@csis.u-tokyo.ac.jp}

\address[Y. Okamoto]{Graduate School of Economics, Kyoto University, Yoshida Honmachi, Sakyo, Kyoto 606-8501, Japan.
}
\email{okamoto.yuuta.57w@st.kyoto-u.ac.jp}

\address[T. Otsu]{Department of Economics, London School of Economics, Houghton Street, London, WC2A 2AE, UK.}
\email{t.otsu@lse.ac.uk}

\begin{abstract}
Since the seminal work by \cite{BeMo08}, the random set theory and related inference methods have been widely applied in partially identified econometric models. Meanwhile, there is an emerging field in statistics for studying random objects in metric spaces, called metric statistics. This paper clarifies a relationship between two fundamental concepts in these literatures, the Aumann and Fr\'echet means, and presents some applications of metric statistics to econometric problems involving random sets.
\end{abstract}

\keywords{Aumann mean, Fr\'echet mean, global Fr\'echet regression, metric statistics, random set theory
}

\maketitle

\section{Introduction}

Since the seminal work by \cite{BeMo08}, the random set theory and related inference methods have been widely applied in partially identified econometric models; see \cite{MoMo18} and \cite{Mo20} for surveys in econometrics, and \cite{Mo17} for a comprehensive overview on the random set theory. The random set approach characterizes an identification region of interest by utilizing the \cite{Au65} mean for set valued random variables (SVRVs) and constructs its sample analog estimator by using  Minkowski averages of SVRVs. Obviously the key ingredient for partial identification analysis using random sets is the Aumann mean, which is a natural and convenient extension of the conventional mean for Euclidean random variables to SVRVs.

On the other hand, in the recent statistics literature for analyzing complex data, there is an emerging and rapidly growing field, called {\it metric statistics} (or statistics for random objects); see \cite{DuChMu24} for an overview and references therein. Metric statistics is concerned with complex data situated in a metric space, and popular examples include distributional data, network data, symmetric positive definite matrices, trees, data on Riemannian manifolds, among others. In this literature, a fundamental notion to characterize the population mean for metric valued random objects is the \cite{Fr48} mean.

Given these literatures, it is natural to ask whether there is a relationship between these notions of population means, and also whether the methodologies of metric statistics can shed new light on econometric analysis of random sets. This is of course not the first paper addressing these issues. For example, Section 3.2 of \cite{Mo17} introduced the Fr\'echet mean and mentioned that it can be applied to SVRVs that situated in the Hausdorff metric space. \cite{LiMoMoPe21} mentioned that their local regression smoother based on the Minkowski average can be interpreted as the sample Fr\'echet mean. Although these discussions are highly insightful, their main focuses are on the conventional random set analysis using the Aumann mean, and the analysis using the Fr\'echet mean is not pursued. On the other hand, to the best of our knowledge, there is no formal study in the literature of metric statistics on SVRVs. This paper is written to fill this gap.

In particular, this paper makes two contributions. First, we formally study the relationship between the Aumann and Fr\'echet means, and establish the equivalence of these notions of population means in the space of nonempty compact and convex sets equipped with the $L^2$-metric based on their support functions. A key ingredient to establish such equivalence is an isometric embedding of a general metric space into a Hilbert space. Second, we apply or extend some methodologies of metric statistics to econometric problems that involve random sets. After introducing the global Fr\'echet regression (GFR) by \cite{PeMu19}, we apply the GFR to the projection model of Euclidean covariates, and clarify the relation with the set valued best linear predictor by \cite{BeMo08}. Furthermore, we present some extensions of the GFR for an errors-in-variables model with set valued outcomes and a missing set valued data problem.

This paper is organized as follows. Section \ref{sec:main} discusses our main result, equivalence of the Aumann and Fr\'echet means. Then Section \ref{sec:stat} presents applications of metric statistics for random sets.

\section{Main result}\label{sec:main}

This section is devoted to discuss the relationship between the Aumann mean and the Fr\'echet mean of SVRVs. After introducing our basic setup and the Aumann mean (Section \ref{sub:Aumann}), we introduce a metric space based on the support function (Section \ref{sub:supp}). Then we introduce the Fr\'echet mean and present a key property, isometric embedding (Section \ref{sub:Frechet}). Based on these preparations, Section \ref{sub:equiv} establishes the equivalence of the Aumann and Fr\'echet means of SVRVs.

\subsection{Setup and Aumann mean}\label{sub:Aumann}

We follow the notation in \cite{BeMo08} and introduce our basic setup. Let $(\Omega, \mathcal{A}, \mu)$ be a measurable space, and $K(\mathbb{R}^d)$ be the collection of all nonempty closed subsets of $\mathbb{R}^d$. A random element $F:\Omega \to K(\mathbb{R}^d)$ is called an SVRV. Let $K_{k}(\mathbb{R}^d)$ be the set of nonempty compact subsets of $\mathbb{R}^d$, and $K_{kc}(\mathbb{R}^d)$ denote the set of nonempty compact and convex subsets of $\mathbb{R}^d$. For an SVRV $F \in K_k(\mathbb{R}^d)$, the Aumann mean is defined as follows.

Let $L^1=L^1(\Omega, \mathbb{R}^d)$ denote the space of measurable random variables with values in $\mathbb{R}^d$ s.t. $\|\xi\|_1 = \mathbb{E}[\|\xi\|]<\infty$, $\mathcal{S}(F)$ denote the set of all measurable selections (or points) from a set $F$, and $\mathcal{S}^1(F) = \mathcal{S}(F) \cap L^1$. The Aumann mean of an SVRV $F \in K_k(\mathbb{R}^d)$ is defined as
\begin{equation}
\mathbb{E}[F] = \left\{ \mathbb{E}[f]: f \in \mathcal{S}^1(F)\right\}.
\end{equation}
Similarly, the conditional Aumann mean of $F$ given $X \in \mathcal{X}$ is defined as $\mathbb{E}[F|X] = \left\{\mathbb{E}[f|X]: f \in \mathcal{S}^1(F)\right\}$, where $\mathcal{X}$ is a subset of $\mathbb{R}^p$. The Aumann mean is a natural generalization of the conventional mean for Euclidean random variables and plays a fundamental role in random set theory and its econometric and statistical applications; see \cite{Mo17}, \cite{MoMo18}, and \cite{Mo20}.

Hereafter we focus on SVRVs on the set of nonempty compact and convex subsets, $K_{kc}(\mathbb{R}^d)$, and characterize the Aumann mean $\mathbb{E}[F]$ for $F\in K_{kc}(\mathbb{R}^d)$ by using the notion of the Fr\'echet mean for random objects in a metric space, which has been increasingly popular in recent literature on metric statistics; see \cite{DuChMu24} for example.

\subsection{Support function and metric on $K_{kc}(\mathbb{R}^d)$}\label{sub:supp}

A key ingredient to clarify the relationship between the Aumann and Fr\'echet means is to choose a proper metric on the set of nonempty compact and convex subsets $K_{kc}(\mathbb{R}^d)$. To this end, we introduce the support function $s(p,F)=\sup_{f \in F}\langle p,f \rangle$ for $F\in K_{kc}(\mathbb{R}^d)$ over $p\in \mathbb{S}^{d-1}$, where $\mathbb{S}^{d-1} = \{x \in \mathbb{R}^d: \|x\|=1\}$ is the unit sphere in $\mathbb{R}^d$. For an SVRV $F\in K_{kc}(\mathbb{R}^d)$, the Aumann mean $\mathbb{E}[F]$ is equivalently characterized by its support function.

For $F, G \in K_{kc}(\mathbb{R}^d)$, define 
\begin{align*}
d_{kc}(F, G) &= \left(\int_{\mathbb{S}^{d-1}} \left\{s(p, F) - s(p,G)\right\}^2 dp\right)^{1/2}=: \|s(\cdot, F) - s(\cdot, G)\|_{2,\mathbb{S}^{d-1}}. 
\end{align*}
This paper employs $d_{kc}$ as a metric on $K_{kc}(\mathbb{R}^d)$ and establishes the equivalence of the Aumann and Fr\'echet means. We close this subsection by presenting some properties related to $d_{kc}$. Let $d_H(\cdot, \cdot)$ be the Hausdorff metric on $K_k(\mathbb{R}^d)$ defined by $d_H(F,G)=\max\{\sup_{f\in F}\inf_{g \in G}\|f-g\|,\sup_{g\in G}\inf_{f \in F}\|f-g\|\}$ for $F,G \in K_k(\mathbb{R}^d)$, $K_{kc}^B(\mathbb{R}^d) = \{F \in K_{kc}(\mathbb{R}^d): \sup_{f \in F}\|f\|\leq B\}$ for a positive constant $B$, $L^2(\mathbb{S}^{d-1})$ denote the space of functions such that $\|f\|_{2,\mathbb{S}^{d-1}}<\infty$, and $\Psi:K_k(\mathbb{R}^d) \to L^2(\mathbb{S}^{d-1})$ be a map such that $\Psi(F) = s(\cdot, F)$. 

\begin{lemma}\label{lem:d}
\quad
    \item[(i)] $d_{kc}$ is a metric on $K_{kc}(\mathbb{R}^d)$.
    \item[(ii)] Let $\{F_n\}_{n \geq 0} \subset K_{kc}^B(\mathbb{R}^d)$ be a sequence of compact convex sets. Then $\lim_{n \to \infty}d_{kc}(F_n, F_0) = 0$ if and only if $\lim_{n \to \infty}d_H(F_n, F_0) = 0$.
    \item[(iii)] $K_{kc}^B(\mathbb{R}^d)$ is a bounded closed convex subset of $K_{kc}(\mathbb{R}^d)$ with respect to $d_{kc}$. 
    \item[(iv)] $\Psi(K_{kc}^B(\mathbb{R}^d))$ is a bounded closed convex subset of $L^2(\mathbb{S}^{d-1})$. 
\end{lemma}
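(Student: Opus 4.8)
The plan is to dispatch the four parts in turn, using throughout the identity $d_{kc}(F,G)=\|s(\cdot,F)-s(\cdot,G)\|_{2,\mathbb{S}^{d-1}}$ and two classical facts about support functions (see \cite{Mo17}): the map $\Psi:F\mapsto s(\cdot,F)$ is injective on $K_{kc}(\mathbb{R}^d)$ (the uniqueness theorem), and $\Psi(\lambda F+(1-\lambda)G)=\lambda\Psi(F)+(1-\lambda)\Psi(G)$ under Minkowski combinations. For (i), symmetry and the triangle inequality are inherited directly from the $L^2(\mathbb{S}^{d-1})$ norm (the latter via Minkowski's inequality), so the only point requiring care is the identity of indiscernibles: if $d_{kc}(F,G)=0$ then $s(\cdot,F)=s(\cdot,G)$ almost everywhere on $\mathbb{S}^{d-1}$, and since the support function of a compact convex set is continuous on $\mathbb{S}^{d-1}$, this equality holds everywhere and, by positive homogeneity, on all of $\mathbb{R}^d$; injectivity of $\Psi$ then gives $F=G$.

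For (ii), I would use the representation $d_H(F,G)=\sup_{p\in\mathbb{S}^{d-1}}|s(p,F)-s(p,G)|$, so that $d_{kc}$ and $d_H$ are respectively the $L^2$ and $L^\infty$ distances between support functions. Since $\mathbb{S}^{d-1}$ carries finite surface measure $\sigma$, the bound $d_{kc}(F_n,F_0)\le\sigma^{1/2}d_H(F_n,F_0)$ gives $d_H\to0\Rightarrow d_{kc}\to0$ at once. The reverse is the crux and rests on the observation that support functions of sets in $K_{kc}^B(\mathbb{R}^d)$ are uniformly $B$-Lipschitz, because $s(p,F)-s(q,F)\le\sup_{f\in F}\langle p-q,f\rangle\le B\|p-q\|$; hence $\{s(\cdot,F_n)-s(\cdot,F_0)\}$ is equicontinuous. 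If $d_H(F_n,F_0)\not\to0$, then along a subsequence there exist $p_n$ with $|s(p_n,F_n)-s(p_n,F_0)|\ge\varepsilon$, and equicontinuity forces $|s(\cdot,F_n)-s(\cdot,F_0)|\ge\varepsilon/2$ on a spherical cap about $p_n$ of measure bounded below by some $c>0$ independent of $n$; thus $d_{kc}(F_n,F_0)^2\ge c(\varepsilon/2)^2$, contradicting $d_{kc}(F_n,F_0)\to0$.

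Parts (iii) and (iv) then follow by combining these ingredients. Boundedness in (iii) comes from $d_{kc}\le\sigma^{1/2}d_H$ and $d_H(F,G)\le2B$ whenever $F,G\subset\{x:\|x\|\le B\}$; convexity holds because $\lambda F+(1-\lambda)G$ is again compact, convex, and contained in the same ball; and closedness follows by passing to a subsequence with $s(\cdot,F_n)\to s(\cdot,F_0)$ almost everywhere, so that $s(\cdot,F_n)\le B$ yields $s(\cdot,F_0)\le B$ everywhere by continuity, i.e.\ $F_0\in K_{kc}^B(\mathbb{R}^d)$. For (iv), boundedness is immediate from $|s(p,F)|\le B$ and convexity from Minkowski-linearity of $\Psi$ together with (iii). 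The one genuinely new step is closedness of $\Psi(K_{kc}^B(\mathbb{R}^d))$ in $L^2$: given $s(\cdot,F_n)\to h$ in $L^2$, the uniform boundedness and equi-Lipschitz bounds let me apply Arzel\`a--Ascoli to extract a uniformly convergent subsequence with limit $h^\ast=h$ almost everywhere; since uniform limits of homogeneously extended sublinear functions remain sublinear, $h^\ast$ is the support function of some $F_0\in K_{kc}(\mathbb{R}^d)$, and $h^\ast\le B$ places $F_0\in K_{kc}^B(\mathbb{R}^d)$, so $h=\Psi(F_0)$.

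I expect the main obstacle to be precisely the reverse direction of (ii) and the closedness in (iv): both require upgrading $L^2(\mathbb{S}^{d-1})$ convergence to uniform convergence, which is generally false and succeeds here only because uniform boundedness of the sets forces the support functions to be equi-Lipschitz. In (iv) there is the additional subtlety of checking that the uniform limit is again a genuine support function---that is, that sublinearity survives the passage to the limit---so that it corresponds to an actual set rather than a mere $L^2$ function.
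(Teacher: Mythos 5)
Your proof is correct and follows the paper's overall architecture --- identify sets with their support functions, use the $B$-Lipschitz bound $|s(u,F)-s(v,F)|\le B\|u-v\|$ to upgrade $L^2$ information to uniform information, and verify that a limit of support functions is again a support function of a set in $K_{kc}^B(\mathbb{R}^d)$ via positive homogeneity and subadditivity. The one genuinely different step is the reverse implication in (ii). The paper argues by Arzel\`a--Ascoli: every subsequence of $\{s(\cdot,F_n)\}$ has a further subsequence converging uniformly to some continuous $t$, the $L^2$ convergence forces $t=s(\cdot,F_0)$, and hence the full sequence converges in sup-norm. You instead give a direct quantitative contradiction: if $|s(p_n,F_n)-s(p_n,F_0)|\ge\varepsilon$ at some $p_n$, the $2B$-Lipschitz difference stays above $\varepsilon/2$ on a spherical cap of measure bounded below independently of $p_n$, so $d_{kc}(F_n,F_0)^2\ge c\varepsilon^2/4$. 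Your argument is more elementary and in fact yields an explicit modulus relating $d_H$ to $d_{kc}$ on $K_{kc}^B(\mathbb{R}^d)$, which the subsequence argument does not; the paper's route has the advantage that the Arzel\`a--Ascoli extraction is set up once and then reused verbatim in the closedness proof of (iv) (where you, like the paper, do need it, since there the limit $h$ is a priori only an $L^2$ element and one must produce a continuous sublinear representative). Two points you compress relative to the paper but which are standard and do not constitute gaps: in (i), the passage from a.e.\ equality of support functions to everywhere equality rests on their (Lipschitz) continuity, exactly as in the paper's Lemma \ref{lem:sf-conti}; and in (iv), the claim that the uniform limit is a genuine support function requires extending to a positively homogeneous function on $\mathbb{R}^d$, checking subadditivity survives the limit, and recovering the set as $\bigcap_{p}\{x:\langle p,x\rangle\le \tilde t(p)\}$ --- the paper's Steps 2--3 --- which you correctly flag as the substantive content even if you do not write it out.
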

Lemma \ref{lem:d} establishes several topological properties of compact convex sets under the metric $d_{kc}$.
In particular, Lemma \ref{lem:d} (ii) shows that convergence under the Hausdorff metric, which can be defined as the sup-norm distance between support functions on $\mathbb{S}^{d-1}$ (see Lemma \ref{lem:Hd-sf-conv}), is equivalent to convergence under the $L^2$-based distance $d_{kc}$. This equivalence motivates the use of $d_{kc}$, which is analytically more tractable, for statistical analysis of random compact convex sets. In Section \ref{sub:equiv}, we show that the Aumann mean (and its sample counterpart, the normalized Minkowski sum) can be characterized as the Fr\'echet mean, that is, the minimizer of $\mathbb{E}[d_{kc}^2(\nu,F)]$, the expectation of the squared distance between a data descriptor $\nu \in K_{kc}^B(\mathbb{R}^d)$ and an SVRV $F\in K_{kc}^B(\mathbb{R}^d)$ (and its sample version). Finally, Lemma \ref{lem:d} (iii) and (iv) guarantee the existence and uniqueness of the Fr\'echet mean.

\subsection{Fr\'echet mean and isometric embedding}\label{sub:Frechet}

We now introduce the Fr\'echet mean for random objects. Let $\mathcal{X}$ be a subset of $\mathbb{R}^p$, $(\mathcal{M},d)$ be a separable metric space, and $(Y,X) \in \mathcal{M} \times \mathcal{X}$ be a pair of random elements. The Fr\'echet mean $\mathbb{E}_\oplus[Y]$ of $Y$ is defined as a minimizer of the population Fr\'echet function $Q(\nu)=\mathbb{E}[d^2(\nu,Y)]$, that is,  
\begin{align*}
\mathbb{E}_\oplus[Y] &\in \mathrm{argmin}_{\nu \in \mathcal{M}}Q(\nu).
\end{align*}
Similarly, the conditional Fr\'echet mean $\mathbb{E}_\oplus[Y|X]$ of $Y$ given $X$ is defined as 
\begin{align*}
\mathbb{E}_\oplus[Y|X] \in  \mathrm{argmin}_{\nu \in \mathcal{M}}Q(\nu|X),\quad Q(\nu|X)=\mathbb{E}[d^2(\nu,Y)|X]. 
\end{align*}
The Fr\'echet mean is a direct generalization of the conventional mean for the Euclidean space toward a general metric space, and its statistical analysis has been increasingly popular in recent literature. The key step to establish the relationship between the Aumann and Fr\'echet means is to consider an isometric embedding of a general metric space into a Hilbert space. To this end, we impose the following assumptions.

\begin{assumption}\label{ass:1} \quad 
\begin{itemize}
\item[(i)] There exist a Hilbert space $\mathcal{H}$ equipped with an inner product $\langle \cdot, \cdot \rangle_\mathcal{H}$, induced norm $\|\cdot\|_\mathcal{H}$, and a continuous injection $\Psi:\mathcal{M} \to \mathcal{H}$ such that $\Psi: \mathcal{M} \to \Psi(\mathcal{M})$ is isometry, i.e., $d(\alpha,\beta)=\|\Psi(\alpha)-\Psi(\beta)\|_\mathcal{H}$ for any $\alpha,\beta \in \mathcal{M}$.
\item[(ii)] The set $\Psi(\mathcal{M})$ is a nonempty closed convex set in $\mathcal{H}$.  
\end{itemize}
\end{assumption}
Note that $(K_{kc}^B(\mathbb{R}^d),d_{kc})$ satisfies Assumption \ref{ass:1}. Indeed, setting $\Psi(F) = s(\cdot, F)$ implies that $K_{kc}^B(\mathbb{R}^d)$ admits an isometric embedding into $L^2(\mathbb{S}^{d-1})$. Furthermore, by Lemma \ref{lem:d} (iv), the image $\Psi(K_{kc}^B(\mathbb{R}^d))$ satisfies Assumption \ref{ass:1} (ii). Let $(\mathcal{M},d)$ be a metric space. The metric $d^2$ is of negative type if, for all $n \geq 2$, $\nu_1,\dots,\nu_n \in \mathcal{M}$ and $\alpha_1,\dots, \alpha_n \in \mathbb{R}$ with $\sum_{i=1}^n\alpha_i = 0$, we have $\sum_{i=1}^n\sum_{j=1}^n \alpha_i\alpha_j d^2(\nu_i,\nu_j) \leq 0$. A sufficient condition for Assumption \ref{ass:1} (i) is provided in Proposition 3 in \citet{sejd:13} for example, which implies that if $d^2$ is of negative type, then this condition is satisfied; see also \citet{scho:38}. Many of the common metric spaces studied in metric statistics are known to satisfy Assumption \ref{ass:1}. Examples include the 2-Wasserstein space for univariate probability distributions, the space of symmetric positive (semi)definite matrices endowed with the Frobenius, power, or log-Euclidean metric, the space of graph Laplacians equipped with the Frobenius metric, and the space of compositional data endowed with the Aitchison metric. See also Appendix C.1 in \cite{KuZhOtMu25b}.

Under these assumptions, we obtain the following characterizations of the Fr\'echet mean. For a random element $Z$ taking values in a Hilbert space $\mathcal{H}$, we define its expectation as the Riesz representation of the linear functional that maps $h \in \mathcal{H}$ to $\mathbb{E}[\langle h,Z \rangle_\mathcal{H}] \in \mathbb{R}$.

\begin{proposition}\label{prp:F-mean-embed}
Suppose that Assumption \ref{ass:1} holds true and $\mathbb{E}[\|\Psi(Y)\|_\mathcal{H}]<\infty$. Then the following results hold. 
\begin{itemize}
\item[(i)] The object $\Psi^{-1}(\mathbb{E}[\Psi(Y)])$ is well defined, and $\mathbb{E}_\oplus[Y]= \Psi^{-1}(\mathbb{E}[\Psi(Y)])$.  
\item[(ii)] $\Psi(\mathbb{E}_\oplus[Y]) = \mathbb{E}[\Psi(\mathbb{E}_\oplus[Y|X])]$.
\end{itemize}
\end{proposition}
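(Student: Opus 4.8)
The plan is to transport the Fr\'echet minimization through the isometric embedding $\Psi$ into $\mathcal{H}$, where the minimizer of the mean squared distance is the ordinary Bochner mean, and then to use the closed convexity of $\Psi(\mathcal{M})$ to keep this mean inside the image so that $\Psi^{-1}$ may be applied. For part (i) I would first note that $\mathbb{E}[\|\Psi(Y)\|_\mathcal{H}]<\infty$ and Cauchy--Schwarz make $h \mapsto \mathbb{E}[\langle h,\Psi(Y)\rangle_\mathcal{H}]$ a bounded linear functional, so the Riesz representation theorem produces a unique $\mathbb{E}[\Psi(Y)] \in \mathcal{H}$. The crucial step is that this mean lies in $\Psi(\mathcal{M})$: if not, since $\Psi(\mathcal{M})$ is closed and convex by Assumption \ref{ass:1}(ii), Hahn--Banach separation yields $h \in \mathcal{H}$ and $c \in \mathbb{R}$ with $\langle h,x\rangle_\mathcal{H}\le c$ for all $x \in \Psi(\mathcal{M})$ but $\langle h,\mathbb{E}[\Psi(Y)]\rangle_\mathcal{H} > c$; taking expectations in the first inequality (using $\Psi(Y)\in\Psi(\mathcal{M})$ almost surely) contradicts the second. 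As $\Psi$ is injective, $\Psi^{-1}(\mathbb{E}[\Psi(Y)])$ is then well defined.

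To identify the minimizer I would rewrite $Q(\nu)=\mathbb{E}[\|\Psi(\nu)-\Psi(Y)\|_\mathcal{H}^2]$ via the isometry. Because only a first moment is assumed, rather than expanding $Q$ directly I would compare against a fixed reference $\nu_0$: expanding the inner products and taking expectations (the cross term is integrable by the moment assumption) gives the identity $Q(\nu)-Q(\nu_0)=\|\Psi(\nu)-\mathbb{E}[\Psi(Y)]\|_\mathcal{H}^2-\|\Psi(\nu_0)-\mathbb{E}[\Psi(Y)]\|_\mathcal{H}^2$, which is finite even when $\mathbb{E}[\|\Psi(Y)\|_\mathcal{H}^2]=\infty$. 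Hence minimizing $Q$ over $\mathcal{M}$ is equivalent to minimizing $\|a-\mathbb{E}[\Psi(Y)]\|_\mathcal{H}^2$ over $a\in\Psi(\mathcal{M})$, whose unique solution is the metric projection of $\mathbb{E}[\Psi(Y)]$ onto the closed convex set $\Psi(\mathcal{M})$; since this mean already lies in $\Psi(\mathcal{M})$, the projection is the mean itself, so $\mathbb{E}_\oplus[Y]=\Psi^{-1}(\mathbb{E}[\Psi(Y)])$.

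For part (ii) I would apply part (i) unconditionally to get $\Psi(\mathbb{E}_\oplus[Y])=\mathbb{E}[\Psi(Y)]$ and then conditionally on $X$---the same separation and projection arguments apply to $Q(\cdot\mid X)$, with $\mathbb{E}[\Psi(Y)\mid X]\in\Psi(\mathcal{M})$ almost surely---to get $\Psi(\mathbb{E}_\oplus[Y\mid X])=\mathbb{E}[\Psi(Y)\mid X]$. The claim then follows from the tower property of the Hilbert-space-valued conditional expectation, $\mathbb{E}[\mathbb{E}[\Psi(Y)\mid X]]=\mathbb{E}[\Psi(Y)]$. I expect the main obstacle to be verifying that the (conditional) mean stays inside $\Psi(\mathcal{M})$---precisely where the closed-convexity half of Assumption \ref{ass:1} is used---together with the careful handling of the first-moment-only assumption, which is why the reference-point comparison rather than a direct variance decomposition is required.
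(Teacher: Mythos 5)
Your proposal is correct and follows essentially the same route as the paper: embed via the isometry, show $\mathbb{E}[\Psi(Y)]\in\Psi(\mathcal{M})$ by Hahn--Banach separation against the closed convex image (the paper isolates this as Lemma \ref{lem:convex-hilbert-exp}), identify the Fr\'echet mean with the Hilbert-space mean, and obtain (ii) by conditioning plus the tower property. Your reference-point comparison $Q(\nu)-Q(\nu_0)$ is a slightly more careful handling of the first-moment-only assumption than the paper's direct $\mathrm{argmin}_{h}\mathbb{E}[\|h-\Psi(Y)\|_\mathcal{H}^2]$ formulation, but it is a technical refinement of the same argument rather than a different approach.
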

According to Proposition \ref{prp:F-mean-embed} (i), the Fr\'echet mean of $Y$ is obtained as the pullback, through $\Psi$, of the expectation of $\Psi(Y)$ in the image space $\Psi(K_{kc}^B(\mathbb{R}^d))$. Proposition \ref{prp:F-mean-embed} (ii) can be interpreted as a law of iterated expectation for random objects, which naturally generalizes the corresponding result for Euclidean random variables.

\subsection{Equivalence of Aumann and Fr\'echet means}\label{sub:equiv}

We now apply the Fr\'echet mean to SVRVs in the metric space $(K_{kc}^B(\mathbb{R}^d),d_{kc})$. Let $(F,X) \in K_{kc}^B(\mathbb{R}^d) \times \mathcal{X}$ be a pair of random elements. For $F,G \subset \mathbb{R}^d$, let $F \oplus G=\{f+g: f \in F, g \in G\}$ denote the Minkowski sum of $F$ and $G$. The population Fr\'echet mean $\mathbb{E}_\oplus[F]$ of $F$ with respect to the metric $d_{kc}$ is defined as a minimizer of the population Fr\'echet function $\mathcal{Q}(\nu)=\mathbb{E}[d_{kc}^2(\nu,F)]$, that is,  
\begin{align*}
\mathbb{E}_\oplus[F] &\in  \mathrm{argmin}_{\nu \in K_{kc}^B(\mathbb{R}^d)}\mathcal{Q}(\nu).     
\end{align*}
Similarly, the population conditional Fr\'echet mean of $F$ given $X$ is defined as 
\[
\mathbb{E}_\oplus[F|X] \in  \mathrm{argmin}_{\nu \in K_{kc}^B(\mathbb{R}^d)}\mathcal{Q}(\nu|X), \quad \mathcal{Q}(\nu|X) = \mathbb{E}[d_{kc}^2(\nu,F)|X]. 
\]
The main results of this paper are presented as follows.
\begin{proposition}\label{prp: SVRV-exp-cexp}
For an SVRV $F:\Omega \to K_{kc}^B(\mathbb{R}^d)$, the following results hold true. 
\begin{itemize}
\item[(i)] $\mathbb{E}_\oplus[F]$ and $\mathbb{E}_\oplus[F|X]$ with respect to the metric $d_{kc}$ uniquely exist.
\item[(ii)] $\mathbb{E}_\oplus[F] = \mathbb{E}[F]$.
\item[(iii)] $\mathbb{E}_\oplus[F|X] = \mathbb{E}[F|X]$ and $\mathbb{E}_\oplus[F] = \mathbb{E}_\oplus[\mathbb{E}_\oplus[F|X]]$.   
\end{itemize}
\end{proposition}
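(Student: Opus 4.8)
The plan is to reduce everything to the isometric embedding machinery of Proposition \ref{prp:F-mean-embed} and to the classical fact that the Aumann mean of a compact convex SVRV is characterized by the pointwise expectation of its support function. First I would verify that the hypotheses of Proposition \ref{prp:F-mean-embed} hold in the present setting. As noted after Assumption \ref{ass:1}, the map $\Psi(F)=s(\cdot,F)$ realizes $(K_{kc}^B(\mathbb{R}^d),d_{kc})$ as an isometric copy of $\Psi(K_{kc}^B(\mathbb{R}^d))\subset L^2(\mathbb{S}^{d-1})$, and by Lemma \ref{lem:d} (iv) this image is a nonempty closed convex set, so Assumption \ref{ass:1} holds with $\mathcal{H}=L^2(\mathbb{S}^{d-1})$. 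The integrability condition $\mathbb{E}[\|\Psi(F)\|_\mathcal{H}]<\infty$ is immediate: for $F\in K_{kc}^B(\mathbb{R}^d)$ and $p\in\mathbb{S}^{d-1}$ one has $|s(p,F)|\le \sup_{f\in F}\|f\|\le B$ by Cauchy--Schwarz, whence $\|\Psi(F)\|_\mathcal{H}\le B\,|\mathbb{S}^{d-1}|^{1/2}$ uniformly. Part (i) then follows from Proposition \ref{prp:F-mean-embed} (i): writing $\psi=\Psi(\nu)$, the Fr\'echet function becomes $\mathcal{Q}(\nu)=\mathbb{E}[\|\psi-\Psi(F)\|_\mathcal{H}^2]=\|\psi-\mathbb{E}[\Psi(F)]\|_\mathcal{H}^2+\text{const}$, so minimizing over the closed convex set $\Psi(K_{kc}^B(\mathbb{R}^d))$ amounts to the (unique) Hilbert-space projection of $\mathbb{E}[\Psi(F)]$ onto it; since $\Psi(F)$ lies in this closed convex set almost surely, so does its expectation, and the minimizer is $\Psi^{-1}(\mathbb{E}[\Psi(F)])$. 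Injectivity of $\Psi$ together with uniqueness of the projection gives existence and uniqueness of $\mathbb{E}_\oplus[F]$, and the conditional statement is identical after replacing $\mathbb{E}$ by $\mathbb{E}[\,\cdot\,|X]$.

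For part (ii), I would apply $\Psi$ to the identity in Proposition \ref{prp:F-mean-embed} (i) to obtain $\Psi(\mathbb{E}_\oplus[F])=\mathbb{E}[\Psi(F)]$, i.e. $s(\cdot,\mathbb{E}_\oplus[F])=\mathbb{E}[s(\cdot,F)]$ in $L^2(\mathbb{S}^{d-1})$. The Riesz-representation expectation $\mathbb{E}[\Psi(F)]$ is identified with the pointwise map $p\mapsto\mathbb{E}[s(p,F)]$ by Fubini's theorem, justified by the uniform bound above: for every $h\in L^2(\mathbb{S}^{d-1})$ one has $\langle h,\mathbb{E}[\Psi(F)]\rangle_\mathcal{H}=\mathbb{E}[\int h(p)s(p,F)\,dp]=\int h(p)\mathbb{E}[s(p,F)]\,dp$. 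The bridge to the Aumann mean is the classical support-function representation $s(p,\mathbb{E}[F])=\mathbb{E}[s(p,F)]$ for integrable compact convex SVRVs (see \cite{Mo17}). Combining the two gives $s(\cdot,\mathbb{E}_\oplus[F])=s(\cdot,\mathbb{E}[F])$ almost everywhere on $\mathbb{S}^{d-1}$; since support functions of compact convex sets are continuous, the equality holds everywhere, and a compact convex set is uniquely determined by its support function, so $\mathbb{E}_\oplus[F]=\mathbb{E}[F]$.

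Part (iii) splits into two claims. The equality $\mathbb{E}_\oplus[F|X]=\mathbb{E}[F|X]$ is obtained exactly as in part (ii), conditioning on $X$ throughout: the conditional Fr\'echet mean equals $\Psi^{-1}(\mathbb{E}[\Psi(F)|X])$, which $\Psi$ carries to $p\mapsto\mathbb{E}[s(p,F)|X]=s(p,\mathbb{E}[F|X])$, the support function of the conditional Aumann mean. For the iterated-expectation identity, set $G=\mathbb{E}_\oplus[F|X]$, a $K_{kc}^B(\mathbb{R}^d)$-valued random element, so $\|\Psi(G)\|_\mathcal{H}$ is again bounded and part (i) applies to $G$. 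Proposition \ref{prp:F-mean-embed} (ii) gives $\Psi(\mathbb{E}_\oplus[F])=\mathbb{E}[\Psi(\mathbb{E}_\oplus[F|X])]=\mathbb{E}[\Psi(G)]$, while Proposition \ref{prp:F-mean-embed} (i) applied to $G$ gives $\Psi(\mathbb{E}_\oplus[G])=\mathbb{E}[\Psi(G)]$. Equating and using injectivity of $\Psi$ yields $\mathbb{E}_\oplus[F]=\mathbb{E}_\oplus[G]=\mathbb{E}_\oplus[\mathbb{E}_\oplus[F|X]]$.

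The conceptual crux, and the only ingredient not already delivered by Proposition \ref{prp:F-mean-embed}, is the support-function linearity of the Aumann mean, $s(p,\mathbb{E}[F])=\mathbb{E}[s(p,F)]$; once this is in hand the argument is essentially bookkeeping. The routine technical points I would be careful about are the Fubini interchange identifying the Hilbert-space expectation with the pointwise expectation of support functions, and the passage from almost-everywhere to everywhere equality of support functions (via continuity) before invoking the uniqueness of the set determined by its support function.
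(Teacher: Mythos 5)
Your proposal is correct and follows essentially the same route as the paper: verify Assumption \ref{ass:1} for $(K_{kc}^B(\mathbb{R}^d),d_{kc})$ via Lemma \ref{lem:d} (iv), invoke Proposition \ref{prp:F-mean-embed} to get $s(\cdot,\mathbb{E}_\oplus[F])=\mathbb{E}[s(\cdot,F)]$, bridge to the Aumann mean through the classical identity $\mathbb{E}[s(p,F)]=s(p,\mathbb{E}[F])$ (the paper's Lemma \ref{lem:int-bdd-SVRV}), and conclude by uniqueness of a compact convex set given its support function. Your extra attention to the integrability hypothesis, the Fubini identification of the Riesz expectation with the pointwise expectation, and the almost-everywhere-to-everywhere passage fills in details the paper leaves implicit, but does not change the argument.
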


Proposition \ref{prp: SVRV-exp-cexp} (i) guarantees uniqueness of the Fr\'echet means $\mathbb{E}_\oplus[F]$ and $\mathbb{E}_\oplus[F|X]$. Proposition \ref{prp: SVRV-exp-cexp} (ii) and (iii) are our main results, equivalence of the Fr\'echet and conditional Fr\'echet means to the Aumann and conditional Aumann means, respectively. Furthermore, Proposition \ref{prp: SVRV-exp-cexp} (iii) also provides the law of iterated expectations for the Fr\'echet mean.

Note that an SVRV $F \in K_{kc}(\mathbb{R}^d)$ is a metric space-valued random element, which is called random object. See e.g., \cite{MaAl14} and \cite{MaDr21} for a review. By Proposition \ref{prp: SVRV-exp-cexp}, the (conditional) Aumann mean of $F$ can be defined as the (conditional) Fr\'echet mean of $F$ under the metric $d_{kc}$. From this viewpoint, statistical analysis of SVRVs falls within the scope of metric statistics, a field that has seen remarkable development in recent years; see \cite{DuChMu24} for example. 

\begin{remark}
We can also show that the sample Fr\'echet mean for $\mathbb{E}_\oplus[F]$ based on SVRVs $F_1,\dots, F_n \in K_{kc}^B(\mathbb{R}^d)$ coincides with the conventional sample Minkowski mean, which is a sample counterpart of the Aumann mean. More precisely, the sample Fr\'echet mean $\mu_{\oplus,n}$ of $F_1,\dots, F_n$ with respect to the metric $d_{kc}$ is defined as a minimizer of the sample Fr\'echet function $\mathcal{Q}_n(\nu)={1 \over n}\sum_{i=1}^nd_{kc}^2(\nu,F_i)$, that is
\begin{align*}
\mu_{\oplus,n} \in \mathrm{argmin}_{\nu \in K_{kc}^B(\mathbb{R}^d)}\mathcal{Q}_n(\nu).
\end{align*}      
\end{remark}

The equivalence of the sample Minkowski and Fr\'echet means is presented as follows.
\begin{corollary}\label{cor:sample-F-A-exp}
Let $F_1,\dots, F_n \in K_{kc}^B(\mathbb{R}^d)$ be SVRVs. Then there exists a unique sample Fr\'echet mean $\mu_{\oplus,n}$ with respect to $d_{kc}$ and it coincides with the sample Minkowski mean, that is, $\mu_{\oplus,n}={1 \over n} \bigoplus_{i=1}^n F_i$, where $\bigoplus_{i=1}^n F_i$ is the Minkowski sum of $F_1,\dots,F_n$. 
\end{corollary}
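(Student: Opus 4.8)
The plan is to reduce the set-valued minimization to a standard least-squares problem in the Hilbert space $L^2(\mathbb{S}^{d-1})$ via the isometric embedding $\Psi(F)=s(\cdot,F)$, and then to exploit the fact that the Minkowski average is precisely the preimage under $\Psi$ of the empirical barycenter of the embedded points. This is the sample analogue of Proposition \ref{prp:F-mean-embed} (i), obtained by specializing to the empirical distribution that assigns mass $1/n$ to each $F_i$; I will nonetheless give a self-contained argument.

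First I would record the two classical facts about support functions, namely $s(\cdot, F \oplus G) = s(\cdot, F) + s(\cdot, G)$ and $s(\cdot, \lambda F) = \lambda\, s(\cdot, F)$ for $\lambda \geq 0$. Writing $\bar F = {1 \over n}\bigoplus_{i=1}^n F_i$ for the Minkowski mean and $\psi_i = \Psi(F_i)$, these identities give $\Psi(\bar F) = {1 \over n}\sum_{i=1}^n \psi_i =: \bar\psi$. Moreover $\bar F \in K_{kc}^B(\mathbb{R}^d)$, since any selection ${1 \over n}\sum_i f_i$ with $f_i \in F_i$ satisfies $\|{1 \over n}\sum_i f_i\| \leq {1 \over n}\sum_i \|f_i\| \leq B$; hence $\bar\psi$ lies in the image $\Psi(K_{kc}^B(\mathbb{R}^d))$.

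Next, by the isometry in Assumption \ref{ass:1} (i), for every $\nu \in K_{kc}^B(\mathbb{R}^d)$ we have $d_{kc}^2(\nu, F_i) = \|\Psi(\nu) - \psi_i\|_{2,\mathbb{S}^{d-1}}^2$, so that
\[
\mathcal{Q}_n(\nu) = {1 \over n}\sum_{i=1}^n \|\Psi(\nu) - \psi_i\|_{2,\mathbb{S}^{d-1}}^2 = \|\Psi(\nu) - \bar\psi\|_{2,\mathbb{S}^{d-1}}^2 + {1 \over n}\sum_{i=1}^n \|\psi_i - \bar\psi\|_{2,\mathbb{S}^{d-1}}^2,
\]
where the second equality is the bias--variance decomposition in $L^2(\mathbb{S}^{d-1})$. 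The second term is constant in $\nu$, so minimizing $\mathcal{Q}_n$ over $K_{kc}^B(\mathbb{R}^d)$ is equivalent to minimizing $\|\Psi(\nu) - \bar\psi\|_{2,\mathbb{S}^{d-1}}^2$ over $\nu$. Since $\bar\psi = \Psi(\bar F)$ is attainable and $\Psi$ is injective, the unique minimizer over $\Psi(K_{kc}^B(\mathbb{R}^d))$ is $\bar\psi$ itself, attained only at $\nu = \bar F$; hence $\mu_{\oplus,n} = \bar F = {1 \over n}\bigoplus_{i=1}^n F_i$ is the unique sample Fr\'echet mean.

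The main obstacle is conceptual rather than technical: one must recognize that the constrained minimization collapses to the unconstrained Hilbert-space least-squares problem because the empirical barycenter $\bar\psi$ is not merely in the closed convex hull of the embedded points but is itself the image of a genuine compact convex set, namely the Minkowski mean. This is exactly where the additivity of support functions under Minkowski combination---equivalently, the fact that $\Psi(K_{kc}^B(\mathbb{R}^d))$ is closed under convex combinations, cf.\ Lemma \ref{lem:d} (iv)---does the essential work; without it, the minimizer would only be the metric projection of $\bar\psi$ onto the feasible image, which need not coincide with $\bar\psi$.
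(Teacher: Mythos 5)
Your proof is correct and follows essentially the same route as the paper: both pass through the support-function embedding $\Psi$, use the additivity and positive homogeneity of support functions to identify the empirical barycenter $\bar\psi$ with $\Psi\bigl({1 \over n}\bigoplus_{i=1}^n F_i\bigr)$, and conclude by injectivity of $\Psi$. The only cosmetic difference is that the paper cites Proposition \ref{prp:F-mean-embed} (i) applied to the empirical measure together with Lemmas \ref{lem:sf-add-conv} (iii) and \ref{lem:sf-cc-unique}, whereas you inline that argument via the bias--variance decomposition in $L^2(\mathbb{S}^{d-1})$.
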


\section{Metric statistics for random sets}\label{sec:stat}
This section presents several methods developed within metric statistics that can be applied to the analysis of SVRVs. For $t \in \mathbb{R}$, define $t F = \{tf: f \in F\}$. Let $\mathcal{X} \subset \mathbb{R}^p$ be a compact set.

\subsection{Linear regression as a weighted Fr\'echet mean}

To motivate, consider the linear projection model for an Euclidean outcome $Y\in\mathbb{R}$ and Euclidean covariates $X \in \mathbb{R}^p$,  
\[
Y=\theta_0^*+(\theta_1^*)'(X-\mu)+\varepsilon,\quad\mathbb{E}[X\varepsilon]=0,
\]
where $\mu=\mathbb{E}[X]$ and the vector $(\theta_0^*,\theta_1^*)$ solves
\[
(\theta_0^*,\theta_1^*)=\mathrm{argmin}_{(\theta_0,\theta_1) \in\mathbb{R}^{p+1}}\mathbb{E}[(\mathbb{E}[Y|X]-\{\theta_{0}+\theta_{1}'(X-\mu)\})^{2}].
\]
Let $\sigma_{YX}=\mathbb{E}[Y(X-\mu)]$ and $\Sigma=\mathbb{E}[(X-\mu)(X-\mu)']$. Assume that $\Sigma$ is invertible. Then we have $\theta_0^*=\mathbb{E}[Y]$ and $\theta_1^*=\Sigma^{-1}\sigma_{YX}$, and obtain the regression function
\begin{align*}
m(x) & :=\theta_0^*+(\theta_1^*)'(x-\mu)
 =\mathbb{E}[Y]+(x-\mu)\Sigma^{-1}\sigma_{YX}\\
& =\mathbb{E}[Y+(x-\mu)'\Sigma^{-1}(X-\mu)Y]
 =\mathbb{E}[w(x,X)Y],
\end{align*}
where $w(x,z)=1+(x-\mu)'\Sigma^{-1}(z-\mu)'$. Since $\mathbb{E}[w(x,X)]=1$, the regression function $m(x)$ is characterized as the solution 
\[
m(x)=\mathrm{argmin}_{y\in\mathbb{R}}\mathbb{E}[w(x,X)d_{E}^2(y,Y)],
\]
where $d_E$ is the standard Euclidean metric.

By extending the above representation of $m(x)$, the global Fr\'echet regression (GFR) function for an outcome $Y$ in a general metric space $(\mathcal{M},d)$ is defined as follows \citep{PeMu19}:
\begin{align*}
m_{\oplus}(x)=\mathrm{argmin}_{\nu\in\mathcal{M}}\mathbb{E}[w(x,X)d^{2}(\nu,Y)].
\end{align*}
Let $\{Y_i,X_i\}_{i=1}^n$ be an i.i.d. sample from the joint distribution of $(Y,X)\in \mathcal{M}\times \mathbb{R}^p$. Then the GFR estimator is given as the sample counterpart of $m_{\oplus}$, that is
\begin{align}\label{eq:sGFR-def}
\hat{m}_{\oplus}(x) & =\mathrm{argmin}_{\nu\in\mathcal{M}}{1\over n}\sum_{i=1}^n\hat{w}(x,X_i)d^{2}(\nu,Y_i),
\end{align}
where $\hat{w}(x,z)=1+(x-\bar{X})'\hat{\Sigma}^{-1}(z-\bar{X})$, $\bar{X}=n^{-1}\sum_{i=1}^n X_i$, and $\hat{\Sigma}=n^{-1}\sum_{i=1}^n (X_i-\bar{X})(X_i-\bar{X})'$.

\subsection{Global Fr\'echet regression for random sets}

We now apply the GFR for a pair of random elements $(F,X) \in K_{kc}^B(\mathbb{R}^d) \times \mathcal{X}$, which is defined as 
\begin{align*}
m_{\oplus}(x)=\mathrm{argmin}_{\nu\in K_{kc}^B(\mathbb{R}^d)}\mathbb{E}[w(x,X)d_{kc}^2(\nu,F)].
\end{align*}
In order to derive the explicit form of $m_{\oplus}(x)$, recall $\Psi: K_{kc}^B(\mathbb{R}^d) \ni F \mapsto s(\cdot, F) \in L^2(\mathbb{S}^{d-1})$, and define 
\[
m_{\oplus,\Psi}(x) = \mathrm{argmin}_{h \in L^2(\mathbb{S}^{d-1})}\mathbb{E}[w(x,X)\|h - \Psi(F)\|_{2,\mathbb{S}^{d-1}}].
\]
Then we have 
\begin{align*}
m_{\oplus,\Psi}(x)
&= \mathbb{E}[w(x,X)\Psi(F)]
= \mathbb{E}[w(x,X)\Psi(\mathbb{E}_\oplus[F|X])]
= \mathbb{E}[w(x,X)s(\cdot, \mathbb{E}_\oplus[F|X])],
\end{align*}
where the second equality follows from Proposition \ref{prp:F-mean-embed}. 

Assume that $\inf_{z \in \mathcal{X}}w(x,z) \geq 1$, which guarantees $w(x,X) F \in K_{kc}^B(\mathbb{R}^d)$. Then from  Proposition \ref{prp: SVRV-exp-cexp} (ii) and Lemma \ref{lem:sf-add-conv} (i), we have
\begin{align*}
m_{\oplus,\Psi}(x) 
&= \mathbb{E}[s(\cdot, w(x,X) \mathbb{E}_\oplus[F|X])]
= s(\cdot, \mathbb{E}[w(x,X)  F])
= \Psi(\mathbb{E}[w(x,X) F]),
\end{align*}
which yields the expression of $m_\oplus(x)$ by the Aumann means:
\begin{align*}
m_\oplus(x) 
&= \mathbb{E}[w(x,X) F] = \mathbb{E}[F] \oplus (x-\mu)'\Sigma^{-1}\mathbb{E}[(X-\mu)  F]\\
&= (1, (x-\mu)')\left(
\begin{array}{cc}
1 & 0 \\
0 & \Sigma \\
\end{array}\right)^{-1}\left(
\begin{array}{c}
\mathbb{E}[F]\\
\mathbb{E}[(X-\mu) F] \\
\end{array}\right) =:  (1, (x-\mu)')\Theta. 
\end{align*}
Note that $\Theta$ corresponds to the population set valued best linear predictor in \cite{BeMo08}. 
On the other hand, if $\inf_{z \in \mathcal{X}}w(x,z) < 1$, we have 
\begin{align}\label{ineq:GFR-BLP}
m_{\oplus,\Psi}(x) \leq \mathbb{E}[s(\cdot, w(x,X)\mathbb{E}_\oplus[F|X])] =  \Psi(\mathbb{E}[w(x,X)F]).
\end{align}
Specifically, it is possible to provide an example such that $m_{\oplus,\Psi}(x)< \Psi(\mathbb{E}[w(x,X)F])$ (see Appendix \ref{subsec:app-counter-exm}). Therefore, for the GFR with a general weight function $w(x,z)$, we compute $m_\oplus(x)$ by projecting $m_{\oplus,\Psi}(x)$ on $\Psi(K_{kc}^B(\mathbb{R}^d))$, that is, 
\begin{align*}
m_\oplus(x) &= \Psi^{-1}(\widetilde{m}_{\oplus,\Psi}(x)),\quad \widetilde{m}_{\oplus,\Psi}(x)= \mathrm{argmin}_{h \in \Psi(K_{kc}^B(\mathbb{R}^d))}\|h - m_{\oplus,\Psi}(x)\|_{2,\mathbb{S}^{d-1}}. 
\end{align*}
The following result shows that $m_\oplus(x)$ is in fact the GFR function.  
\begin{proposition}\label{prp:F-Reg-solution}
$m_\oplus(x)=\Psi^{-1}(\widetilde{m}_{\oplus,\Psi}(x))$ is the GFR function, that is, $\Psi^{-1}(\widetilde{m}_{\oplus,\Psi}(x))$ is the unique minimizer of $\mathbb{E}[w(x,X)d_{kc}^2(\nu,F)]$ over $\nu \in K_{kc}^B(\mathbb{R}^d)$. Additionally, if $\sup_{z \in \mathcal{X}}|w(x,z)|<\infty$ and $m_{\oplus,\Psi}(x) \in \Psi(K_{kc}(\mathbb{R}^d))$, then $m_\oplus(x) \subset \mathbb{E}[w(x,X)F]$. 
\end{proposition}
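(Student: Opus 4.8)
The plan is to establish that $\Psi^{-1}(\widetilde{m}_{\oplus,\Psi}(x))$ minimizes the weighted Fréchet criterion by exploiting the isometry $\Psi$ and the convexity established in Lemma \ref{lem:d} (iv). First I would transport the minimization problem from $K_{kc}^B(\mathbb{R}^d)$ into the Hilbert space $L^2(\mathbb{S}^{d-1})$. Since $\Psi$ is an isometry, for any $\nu \in K_{kc}^B(\mathbb{R}^d)$ we have $d_{kc}(\nu,F) = \|\Psi(\nu)-\Psi(F)\|_{2,\mathbb{S}^{d-1}}$, so the objective becomes $\mathbb{E}[w(x,X)\|\Psi(\nu)-\Psi(F)\|_{2,\mathbb{S}^{d-1}}^2]$. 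Writing $h = \Psi(\nu)$ and ranging $h$ over the closed convex set $\mathcal{C} := \Psi(K_{kc}^B(\mathbb{R}^d))$, the problem is to minimize $\mathbb{E}[w(x,X)\|h-\Psi(F)\|_{2,\mathbb{S}^{d-1}}^2]$ over $h \in \mathcal{C}$.

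Next I would perform the standard bias–variance decomposition of the weighted quadratic objective around the weighted Hilbert-space mean $m_{\oplus,\Psi}(x) = \mathbb{E}[w(x,X)\Psi(F)]$. Using $\mathbb{E}[w(x,X)] = 1$, one expands
\begin{align*}
\mathbb{E}[w(x,X)\|h-\Psi(F)\|_{2,\mathbb{S}^{d-1}}^2] = \|h - m_{\oplus,\Psi}(x)\|_{2,\mathbb{S}^{d-1}}^2 + R(x),
\end{align*}
where $R(x) = \mathbb{E}[w(x,X)\|\Psi(F)\|_{2,\mathbb{S}^{d-1}}^2] - \|m_{\oplus,\Psi}(x)\|_{2,\mathbb{S}^{d-1}}^2$ does not depend on $h$. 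Since the remainder is constant in $h$, minimizing the weighted criterion over $\mathcal{C}$ is equivalent to minimizing $\|h - m_{\oplus,\Psi}(x)\|_{2,\mathbb{S}^{d-1}}$ over $h \in \mathcal{C}$, which is precisely the definition of $\widetilde{m}_{\oplus,\Psi}(x)$ as the metric projection of $m_{\oplus,\Psi}(x)$ onto $\mathcal{C}$. Because $\mathcal{C}$ is a nonempty closed convex subset of the Hilbert space $L^2(\mathbb{S}^{d-1})$ (Lemma \ref{lem:d} (iv)), the Hilbert projection theorem guarantees that this projection exists and is unique; pulling back through the injection $\Psi$ yields the unique minimizer $m_\oplus(x) = \Psi^{-1}(\widetilde{m}_{\oplus,\Psi}(x))$. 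I expect the only subtlety here to be verifying the integrability needed to define $m_{\oplus,\Psi}(x)$ as a bona fide Hilbert-space-valued expectation, which follows from the boundedness $\sup_{f\in F}\|f\| \le B$ together with $\sup_{z}|w(x,z)| < \infty$ when that hypothesis is in force.

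For the additional containment claim $m_\oplus(x) \subset \mathbb{E}[w(x,X)F]$, the key observation is that the unconstrained weighted Hilbert mean already lands in the support-function image: the hypothesis $m_{\oplus,\Psi}(x) \in \Psi(K_{kc}(\mathbb{R}^d))$ means $m_{\oplus,\Psi}(x)$ is itself a support function of some compact convex set, and the earlier display identifies this set with $\mathbb{E}[w(x,X)F]$ via $m_{\oplus,\Psi}(x) = \Psi(\mathbb{E}[w(x,X)F])$. Since the projection of a point already lying in a larger convex set cannot move it farther out, I would argue that $\widetilde{m}_{\oplus,\Psi}(x)$, as the support function of $m_\oplus(x)$, is pointwise dominated by $m_{\oplus,\Psi}(x) = s(\cdot, \mathbb{E}[w(x,X)F])$, invoking the inequality \eqref{ineq:GFR-BLP}. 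The containment then follows from the order-reversing correspondence between support functions and sets: if $s(\cdot,A) \le s(\cdot,B)$ pointwise on $\mathbb{S}^{d-1}$ for compact convex $A,B$, then $A \subset B$. The main obstacle I anticipate is making this last support-function domination rigorous: one must confirm that projecting onto $\mathcal{C}$ produces a function that does not exceed $m_{\oplus,\Psi}(x)$ pointwise, using the structure of $\mathcal{C}$ as the cone of valid support functions rather than merely its convexity, and carefully reconciling the $L^2$ projection with the pointwise support-function ordering.
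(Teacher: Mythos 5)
Your first part is correct and follows the paper's own route exactly: the bias--variance expansion of $\mathbb{E}[w(x,X)\|h-\Psi(F)\|_{2,\mathbb{S}^{d-1}}^2]$ around $m_{\oplus,\Psi}(x)=\mathbb{E}[w(x,X)\Psi(F)]$ reduces the problem to the metric projection of $m_{\oplus,\Psi}(x)$ onto the closed convex set $\Psi(K_{kc}^B(\mathbb{R}^d))$, whose existence and uniqueness follow from the Hilbert projection theorem (Lemma \ref{lem:A1} combined with Lemma \ref{lem:d} (iv)). Nothing to add there.

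The second part contains a genuine error. You write that ``the earlier display identifies this set with $\mathbb{E}[w(x,X)F]$ via $m_{\oplus,\Psi}(x)=\Psi(\mathbb{E}[w(x,X)F])$'' and later repeat $m_{\oplus,\Psi}(x)=s(\cdot,\mathbb{E}[w(x,X)F])$. That equality holds only when $\inf_{z\in\mathcal{X}}w(x,z)\geq 1$; in general (\ref{ineq:GFR-BLP}) gives only the pointwise inequality $m_{\oplus,\Psi}(x)\leq \Psi(\mathbb{E}[w(x,X)F])$, and Appendix \ref{subsec:app-counter-exm} exhibits a case where it is strict. If the equality you assert were true, the conclusion would be $m_\oplus(x)=\mathbb{E}[w(x,X)F]$ and the containment statement of the proposition would be vacuous. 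The correct assembly of your own ingredients is: the hypothesis $m_{\oplus,\Psi}(x)\in\Psi(K_{kc}(\mathbb{R}^d))$ means $m_{\oplus,\Psi}(x)$ already lies in the constraint set, so the projection is the identity and $\widetilde{m}_{\oplus,\Psi}(x)=m_{\oplus,\Psi}(x)=s(\cdot,m_\oplus(x))$; the condition $\sup_{z\in\mathcal{X}}|w(x,z)|<\infty$ together with Proposition \ref{prp: SVRV-exp-cexp} (ii) ensures $\mathbb{E}[w(x,X)F]$ is itself a compact convex set; then (\ref{ineq:GFR-BLP}) gives $s(\cdot,m_\oplus(x))\leq s(\cdot,\mathbb{E}[w(x,X)F])$ pointwise, and Lemma \ref{lem:sf-ineq} yields the containment. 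Relatedly, the ``main obstacle'' you flag---showing that the $L^2$ projection onto $\Psi(K_{kc}^B(\mathbb{R}^d))$ respects the pointwise ordering of support functions---does not arise under the stated hypothesis, precisely because the projection does nothing there; it is only when $m_{\oplus,\Psi}(x)\notin\Psi(K_{kc}(\mathbb{R}^d))$ that the projection can break the ordering, which is why the proposition makes no containment claim in that case (cf.\ the right panel of Figure \ref{fig:proj}).
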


Figure \ref{fig:proj} illustrates the relation between $m_\oplus(x)$ and $\mathbb{E}[w(x,X)F]$ when $d=1$. The colored region corresponds to the set of support functions. Note that, in the one-dimensional case, $F$ can be written as $F=[-a,b]$ with $b+a\geq0$. Note also that the support function $s(p,F)$ of $F \in K_{kc}(\mathbb{R})$ is characterized by $s(1,F)$ and $s(-1,F)$ and satisfies $b=s(1,F)\geq -s(-1,F)=-a$. Hence the set of support functions $\Psi(K_{kc}(\mathbb{R}))=\{s(\cdot, F): F \in K_{kc}(\mathbb{R})\}$ can be identified with $\{(a,b) \in \mathbb{R}^2: b+a \geq 0\}$. The blue region $R_{+,+}$, the purple region $R_{-,+}$, and the red region $R_{-,-}$ correspond to the sets $K_1,K_2$, and $K_3$ such that $K_1 \sqcup K_2 \sqcup K_3=K_{kc}(\mathbb{R})$ where $K_1=\{[x,y]:0< x\leq y <\infty\}$, $K_2=\{[x,y]:-\infty < x\leq 0 \leq y <\infty\}$, and $K_3=\{[x,y]:-\infty < x \leq y < 0\}$. In each figure, black points correspond to $m_{\oplus, \Psi}$, $\widetilde{m}_{\oplus,\Psi}$, and $\Psi(\mathbb{E}[w(x,X)F])$. The left figure represents the case $m_{\oplus, \Psi}(x) \in \Psi(K_{kc}(\mathbb{R}))$, in which case $m_{\oplus}(x) \subset \mathbb{E}[w(x,X)F]$. The middle figure corresponds to the case  of $m_{\oplus, \Psi}(x) \notin \Psi(K_{kc}(\mathbb{R}))$ and $m_{\oplus}(x) \subset \mathbb{E}[w(x,X)F]$. The right figure corresponds to the case of $m_{\oplus, \Psi}(x) \notin \Psi(K_{kc}(\mathbb{R}))$ and $m_{\oplus}(x) \not\subset \mathbb{E}[w(x,X)F]$. 
\begin{figure}
    \centering
    \includegraphics[width=\linewidth]{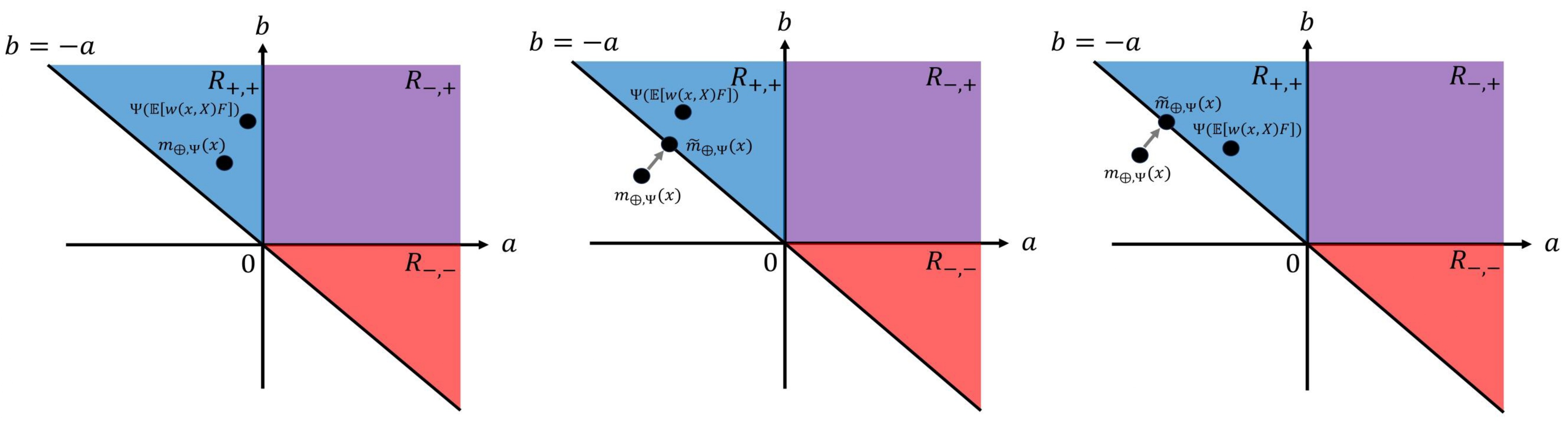}
    \caption{Illustration of the relation between $m_\oplus(x)$ and $\mathbb{E}[w(x,X)F]$ when $d=1$.}\label{fig:proj}
\end{figure}

Let $(F,X) \in K_{kc}^B(\mathbb{R}^d) \times \mathcal{X}$ be a pair of random elements and $\{F_i,X_i\}_{i=1}^n$ be an i.i.d. sample from the joint distribution of $(F,X)$. The uniform convergence rate of the GFR estimator in (\ref{eq:sGFR-def}) can be established as follows.

\begin{proposition}\label{prp:GFR-rate}
Assume that $d \leq 4$. For a given $B_0>0$, we have 
\begin{align*}
\sup_{\|x\|\leq B_0}d_{kc}(\hat{m}_\oplus(x),m_\oplus(x)) = 
\begin{cases}
O_p(n^{-{1 \over 2(\alpha-1)}}) & \text{for any $\alpha>2$ when $d=1$},\\
O_p(n^{-{1 \over 2(1+(d-1)/4)}}) & \text{when $d\in \{2,3,4\}$}.\\
\end{cases}
\end{align*}
\end{proposition}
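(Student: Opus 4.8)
The plan is to treat the estimator as an $M$-estimator in the embedded Hilbert space $L^2(\mathbb{S}^{d-1})$ and to run the empirical-process rate argument for global Fréchet regression of \cite{PeMu19}, with the metric entropy of convex bodies supplying the problem-specific exponent. Writing $h=\Psi(\nu)=s(\cdot,\nu)$ and expanding the weighted quadratic criterion (using $\mathbb{E}[w(x,X)]=1$ and, in the sample, $n^{-1}\sum_i\hat w(x,X_i)=1$), the population objective equals $\|h-m_{\oplus,\Psi}(x)\|_{2,\mathbb{S}^{d-1}}^2$ up to an additive constant, so by Lemma \ref{lem:d}(iv) and Proposition \ref{prp:F-Reg-solution} the target $m_\oplus(x)$ is the metric projection of $m_{\oplus,\Psi}(x)$ onto the closed convex set $\Psi(K_{kc}^B(\mathbb{R}^d))$, and likewise $\hat m_\oplus(x)$ is the projection of $\hat m_{\oplus,\Psi}(x)=n^{-1}\sum_i\hat w(x,X_i)\Psi(F_i)$. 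The variational inequality for projection onto a convex set then yields, uniformly in $x$, an exact quadratic curvature $\mathbb{E}[w(x,X)d_{kc}^2(\nu,F)]-\mathbb{E}[w(x,X)d_{kc}^2(m_\oplus(x),F)]\ge d_{kc}^2(\nu,m_\oplus(x))$, the well-separated-minimum condition, while $F_i\in K_{kc}^B(\mathbb{R}^d)$ forces $|s(\cdot,F_i)|\le B$ and hence a bounded Lipschitz envelope for the criterion increments.

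First I would replace the estimated weights by the population weights. Since $\bar X\to\mu$ and $\hat\Sigma\to\Sigma$ at the parametric rate and $w(x,z)$ is affine in $(x,z)$ with $\mathcal{X}$ and $\{\|x\|\le B_0\}$ compact, the gap between the criteria built from $\hat w$ and from $w$ is $O_p(n^{-1/2})$ uniformly in $x$ and $\nu$, hence negligible relative to the target rate (it is faster than $n^{-1/(2(1+(d-1)/4))}$ for $d\in\{2,3,4\}$ and faster than the near-parametric $d=1$ rate). It then remains to control the local modulus of the centered empirical process $\nu\mapsto(\mathbb{P}_n-P)\big[\,w(x,X)\{d_{kc}^2(\nu,F)-d_{kc}^2(m_\oplus(x),F)\}\,\big]$ over shrinking $d_{kc}$-balls around $m_\oplus(x)$, uniformly in $\|x\|\le B_0$, and to balance it against the quadratic curvature.

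The crux is the metric-entropy input. By Bronshtein's theorem the class of convex bodies inside a fixed ball of $\mathbb{R}^d$ has Hausdorff $\epsilon$-entropy of order $\epsilon^{-(d-1)/2}$; since $\|s(\cdot,F)-s(\cdot,G)\|_{2,\mathbb{S}^{d-1}}\le|\mathbb{S}^{d-1}|^{1/2}\|s(\cdot,F)-s(\cdot,G)\|_\infty=|\mathbb{S}^{d-1}|^{1/2}d_H(F,G)$ by Lemma \ref{lem:Hd-sf-conv}, the same bound controls the $L^2(\mathbb{S}^{d-1})$-covering numbers of $\Psi(K_{kc}^B(\mathbb{R}^d))$, giving $\log N(\epsilon)\lesssim\epsilon^{-(d-1)/2}$. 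Feeding this into a standard maximal inequality (the peeling device used in \cite{PeMu19}) with the bounded Lipschitz envelope yields a local modulus governed by $\int_0^\delta\epsilon^{-(d-1)/4}\,d\epsilon\asymp\delta^{\,1-(d-1)/4}$, which converges precisely when $(d-1)/4<1$, i.e.\ $d\le4$; this is the source of the dimension restriction. Balancing via the rate equation $\sqrt n\,\delta_n^2\asymp\delta_n^{\,1-(d-1)/4}$ gives $\delta_n\asymp n^{-1/(2(1+(d-1)/4))}$ for $d\in\{2,3,4\}$. For $d=1$ one has $\mathbb{S}^0=\{\pm1\}$, so $\Psi(K_{kc}^B(\mathbb{R}))$ is a two-dimensional convex set and the problem is finite-dimensional; the near-parametric rate $n^{-1/(2(\alpha-1))}$ for every $\alpha>2$ then follows from the general GFR rate theorem, with $\alpha$ entering through its moment/tail parameter rather than through the (bounded) entropy of the class.

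Finally, uniformity over $\|x\|\le B_0$ is obtained by chaining over a finite $\epsilon$-net of the compact ball: because $w(x,z)$ is affine in $x$ with uniformly bounded coefficients, $x\mapsto$ (empirical process) is Lipschitz, so the supremum over $x$ costs only a logarithmic factor that leaves the polynomial rate intact. I expect the main obstacle to be the joint handling of the three sources of randomness in a single maximal inequality — the estimated weights $\hat w$, the infinite-dimensional index $\nu$ (tamed by the Bronshtein entropy), and the auxiliary index $x$. Verifying that the weight-estimation error is genuinely lower order uniformly in $(\nu,x)$, and that the localization/peeling step can be carried out with the $L^2(\mathbb{S}^{d-1})$ entropy bound rather than a sup-norm bound, are the delicate technical points; the entropy exponent $(d-1)/2$ and the resulting threshold $d\le4$ are exactly what pin down the stated rates.
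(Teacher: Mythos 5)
Your proposal follows essentially the same route as the paper's proof: both reduce the problem to verifying the conditions of Theorem 2 of \cite{PeMu19} --- existence, uniqueness and the well-separated minimum via the Hilbert-space projection characterization, the exact quadratic curvature $Q(\nu,x)-Q(m_\oplus(x),x)\ge d_{kc}^2(\nu,m_\oplus(x))$ from the variational inequality for projections onto the closed convex set $\Psi(K_{kc}^B(\mathbb{R}^d))$, and the entropy bound $\log N(\varepsilon,K_{kc}^B(\mathbb{R}^d),d_{kc})\lesssim\varepsilon^{-(d-1)/2}$ from Bronshtein's theorem (the paper invokes Lemma 2.7.12 of \cite{vaWe23}), which produces both the $d\le 4$ restriction and the stated rates. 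The only slip is cosmetic: in the $d=1$ case the exponent $\alpha>2$ arises as the curvature exponent in the Petersen--M\"uller conditions (exact quadratic curvature sits at the boundary $\alpha=2$, combined with the polynomial covering bound $N(\varepsilon,K_{kc}^B(\mathbb{R}),d_{kc})\le D\varepsilon^{-2}$), not as a moment or tail parameter, but this does not affect the validity of your argument.
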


In Proposition \ref{prp:GFR-rate}, since $\alpha>2$ can be chosen arbitrarily, the convergence rate of the GFR estimator can be arbitrarily close to the parametric rate when $d=1$. The restriction $d\leq 4$ is due to the complexity of the metric space $(K_{kc}(\mathbb{R}^d),d_{kc})$. Specifically, let $N(\varepsilon, \mathcal{F},d)$ be the covering number, that is, the minimal number of balls of radius $\varepsilon$ (with respect to the metric $d$) needed to cover the set $\mathcal{F}$. We can see $N(\varepsilon, K_{kc}^B(\mathbb{R}^d),d_{kc}) \leq D\varepsilon^{-2}$ for $d=1$ but $N(\varepsilon, K_{kc}^B(\mathbb{R}^d),d_{kc}) \leq e^{D\varepsilon^{-(d-1)/2}}$ for $d\geq 2$ where $D$ is a positive constant which is independent of $\varepsilon$. To show Proposition \ref{prp:GFR-rate} for $d\geq 2$, we need to verify $\int_0^1 \sqrt{\varepsilon^{-(d-1)/2}}d\varepsilon<\infty$, which requires $d\leq 4$. See the proof of Proposition \ref{prp:GFR-rate} for details. The same comment applies to Proposition \ref{prp:IPW-conv} below.

\subsubsection{Illustration}
As an illustration, consider the case of one-dimensional interval-valued random variable, i.e., $F\in K_{kc}^B(\mathbb{R})$. In this case, letting $F=[L,U]$, we have
\begin{equation*}
    m_{\oplus,\Psi}(x) =\mathbb{E}[w(x,X) s(p, F)]\text{ for }p\in\{-1,1\} = \begin{cases}
        -\mathbb{E}[w(x,X) L] & \text{ for } p=-1\\
        \mathbb{E}[w(x,X) U] & \text{ for } p=1
    \end{cases}.
\end{equation*}
Also since $w(x,X)F=[\min\{w(x,X)L, w(x,X)U\}, \max\{w(x,X)L, w(x,X)U\}]$, we obtain
\begin{equation*}
     \Psi(\mathbb{E}[{w(x,X)F}]) = \begin{cases}
        -\mathbb{E}[\min\{w(x,X)L, w(x,X)U\}] & \text{ if } p=-1 \\
        \mathbb{E}[\max\{w(x,X)L, w(x,X)U\}] & \text{ if } p=1\\
    \end{cases}.
\end{equation*}
Following \cite{BeMo08}, we consider the best linear prediction of the returns to education of log-wages for white men between the ages of 20 and 50.
We artificially divide the support of the outcome variable into five equal intervals to construct interval data. 
We employ the March 2009 Current Population Survey data from \cite{Hansen:2022_econ}, and treat them as the population. Then we compare the GFR $m_{\oplus}(x)= \Psi^{-1}(\widetilde{m}_{\oplus,\Psi}(x))$ and the Aumann mean $\mathbb{E}[{w(x,X)F}]$ at $x=\mu\approx 13.727$ and $x=16$, corresponding to the Bachelor's degree. 
Note that $w(\mu,z)=1+(\mu-\mu)'\Sigma^{-1}(z-\mu)=1$ (see Figure \ref{fig: cps}) so that $m_{\oplus}(x)=\mathbb{E}[{w(x,X)F}]$.
In this case, the Aumann mean and the GFR are computed as
\begin{equation*}
    \mathbb{E}[{w(\mu,X)F}]= [9.624, 12.253]\quad \text{and}\quad m_{\oplus}(\mu) = [9.624, 12.253].
\end{equation*}
\begin{figure}[t]
    \centering
    \includegraphics[width=0.6\linewidth]{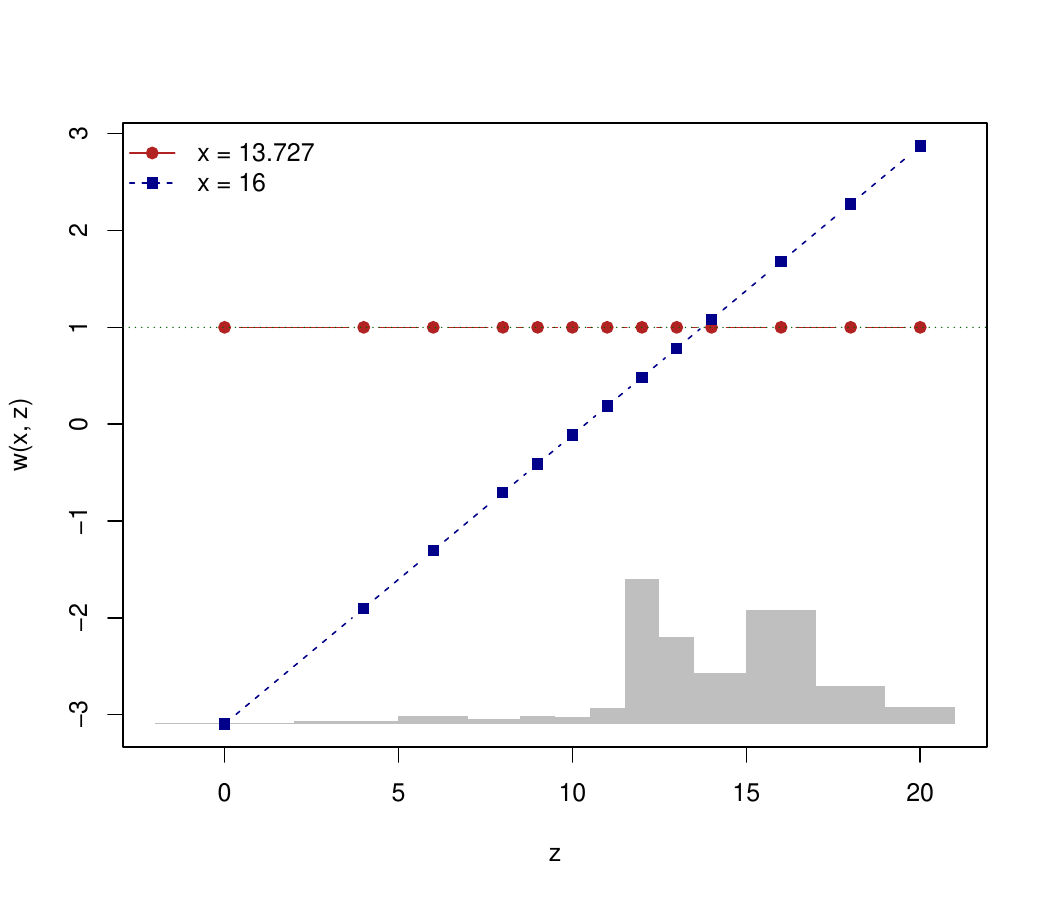}
    \caption{Weight functions $w(\mu,z)$ and $w(16,z)$} and histogram of $X$.
    \label{fig: cps}
\end{figure}
On the other hand, Figure \ref{fig: cps} shows that $\inf_{z \in \mathcal{X}}w(16,z) < 1$.
We can also confirm that $m_{\oplus,\Psi}(16)$ is a proper support function, i.e., $m_{\oplus,\Psi}(16) \in \Psi(K_{kc}(\mathbb{R}))$.
These suggest that $m_{\oplus}(x)\subset\mathbb{E}[{w(x,X)F}]$ (Proposition \ref{prp:F-Reg-solution}).
Indeed, we obtain that
\begin{equation*}
    \mathbb{E}[{w(16,X)F}] = [9.891, 12.778] \quad\text{and}\quad m_{\oplus}(16) = [10.020. 12.649].
\end{equation*}

\subsection{Errors-in-variables}

The GFR approach can be naturally extended to errors-in-variables models. Suppose the object of interest is 
\begin{align*}
m_\oplus(x)=\mathrm{argmin}_{\nu\in K_{kc}^B(\mathbb{R}^d)}\mathbb{E}[w(x,X)d_{kc}^2(\nu,F)].
\end{align*}
However, Euclidean covariates $X$ are mismeasured, and we instead observe $W = X + \varepsilon$, where $\varepsilon$ is a measurement error such that $\mathbb{E}[\varepsilon|X,F]=0$ and $\mathbb{E}[\varepsilon \varepsilon'] = \Sigma_\varepsilon$. In this setup, note that $\mathbb{E}[W]=\mu$, $\Sigma_W = \mathbb{E}[(W - \mathbb{E}[W])(W - \mathbb{E}[W])']=\Sigma + \Sigma_\varepsilon$, and 
\begin{align*}
\mathbb{E}[\{1 + (x-\mathbb{E}[W])'\Sigma_W^{-1}(W - \mathbb{E}[W])\}d_{kc}^2(\nu,F)]
&= \mathbb{E}[\{1 + (x-\mu)'(\Sigma + \Sigma_\varepsilon)^{-1}(X - \mu)\}d_{kc}^2(\nu,F)].
\end{align*}
Then the GFR estimator (\ref{eq:sGFR-def}) with $\mathcal{M}=K_{kc}^B(\mathbb{R}^d)$, $d=d_{kc}$, and $Y_i=F_i$ is not consistent for $m_\oplus(x)$.

Suppose we observe instrumental variables $Z = X + v$ (or repeated measurements on $X$) such that $\mathbb{E}[v|X,F]$ and $\mathbb{E}[v\varepsilon'|X,F]=0$.  Then we can recover $\Sigma$ as 
\[
\Sigma_{ZW}=\mathbb{E}[(Z-\mathbb{E}[Z])(W-\mathbb{E}[W])'] = \mathbb{E}[(X-\mu + v)(X - \mu + \varepsilon)'] = \Sigma. 
\]
Therefore, the GFR function $m_\oplus(x)$ can be estimated based on the alternative representation
\begin{align*}
m_\oplus(x)=\mathrm{argmin}_{\nu\in K_{kc}^B(\mathbb{R}^d)}\mathbb{E}[\tilde{w}(x,X)d_{kc}^2(\nu,F)],
\end{align*}
where $\tilde{w}(x,z) = 1 + (x-\mu)'\Sigma_{ZW}^{-1}(z -\mu)$. 

Other possible statistical methods for analyzing set-valued response variables with Euclidean covariates using the GFR or related regression models include \cite{TuWuMu23} and \cite{KuOt25} for variable selection and model averaging, \cite{BhMu23} for an extension of the GFR to single index models, \cite{PeMu19} and \cite{ChMu22} for extensions of the GFR to local linear regression.

\subsection{Missing data}

Consider the missing at random setup. Let $T \in \{0,1\}$ be the indicator for missing data. We observe $TF$ and $X$, where $(F,X) \in K_{kc}^B(\mathbb{R}^d) \times \mathcal{X}$ and assume $F$ and $T$ are independent given $X$. Let $\{F_i,X_i,T_i\}_{i=1}^n$ be an i.i.d. sample from the joint distribution of $(F,X,T)$. We are interested in the Fr\'echet mean of $F$, that is, $\mathbb{E}_\oplus[F] = \mathrm{argmin}_{\nu \in K_{kc}^B(\mathbb{R}^d)}\mathbb{E}[d_{kc}^2(\nu,F)]$.

Letting $e(x) = \mathbb{P}(T=1|X=x)$, we can see that 
\begin{align*}
\mathbb{E}\left[{T \over e(X)}d_{kc}^2(\nu,F)\right]
&= \mathbb{E}\left[{1 \over e(X)}\mathbb{E}[Td_{kc}^2(\nu,F)|X]\right]
= \mathbb{E}\left[{1 \over e(X)}\mathbb{E}[T|X]\mathbb{E}[d_{kc}^2(\nu,F)|X]\right]
= \mathbb{E}[d_{kc}^2(\nu,F)]. 
\end{align*}
Therefore, the Fr\'echet mean of $F$ can be alternatively written as 
\[
\mathbb{E}_\oplus[F] = \mathrm{argmin}_{\nu \in K_{kc}^B(\mathbb{R}^d)}\mathbb{E}\left[{T \over e(X)}d_{kc}^2(\nu,F)\right], 
\]
which can be estimated by 
\begin{align*}
\hat{\mathbb{E}}_\oplus[F] 
&= \mathrm{argmin}_{\nu \in K_{kc}^B(\mathbb{R}^d)}{1 \over n}\sum_{i=1}^n {T_i \over \hat{e}(X_i)}d_{kc}^2(\nu,F_i) = \left({1 \over n}\sum_{i=1}^n {T_i \over \hat{e}(X_i)}\right)^{-1}{1 \over n} \bigoplus_{i=1}^n {T_i \over \hat{e}(X_i)} F_i,
\end{align*}
and $\hat{e}(x)$ is a nonnegative nonparametric estimator of $e(x)$. 

One can see that $\hat{\mathbb{E}}_\oplus[F]$ is a modified version of inverse probability weighting estimator proposed in \cite{KuZhOtMu24}: 
\begin{align}\label{eq:IPW}
\hat{\Theta}_1^{(\mathrm{IPW})}={1 \over n} \bigoplus_{i=1}^n {T_i \over \hat{e}(X_i)} F_i. 
\end{align} 
See Appendix \ref{subsec:app-IPW} for the derivation of (\ref{eq:IPW}). The following result provides the convergence rate of $\hat{\mathbb{E}}_\oplus[F]$.
\begin{proposition}\label{prp:IPW-conv}
Suppose that the following conditions hold.  
\begin{itemize}
\item[(i)] There exists a constant $\eta_0 \in (0,1/2)$ such that $\eta_0 \leq e(x) \leq 1-\eta_0$ for all $x \in \mathcal{X}$.
\item[(ii)] $\sup_{x \in \mathcal{X}}|\hat{e}(x)-e(x)|=O_p(\rho_n)$, $\rho_n \to 0$ as $n \to \infty$. 
\item[(iii)] As $\delta \to 0$, $\int_0^1 \sqrt{\log N(\delta \varepsilon, B_{\delta'}(e), \|\cdot\|_\infty)}d\varepsilon = O(\delta^{-\varpi_1})$ for some $\delta'>0$ and $\varpi_1 \in (0,1)$, where $\|e_1-e_2\|_\infty=\sup_{x \in \mathcal{X}}|e_1(x)-e_2(x)|$ for $e_1, e_2: \mathcal{X} \to \mathbb{R}$, and $B_\gamma(e)$ is the ball of radius $\gamma>0$ centered at $e$. 
\end{itemize}
Then for $d\leq 4$, we have $d_{kc}(\hat{\mathbb{E}}_\oplus[F], \mathbb{E}_\oplus[F]) = O_p(n^{-{1 \over 2(1 + \max\{(d-1)/4,\varpi_1\})}} + \rho_n^{\varpi_2})$ for any $\varpi_2 \in (0,1)$. 
\end{proposition}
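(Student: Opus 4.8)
The plan is to view the estimator $\hat{\mathbb{E}}_\oplus[F]$ as the minimizer over $\nu\in K_{kc}^B(\mathbb{R}^d)$ of the empirical criterion $M_n(\nu)=\frac{1}{n}\sum_{i=1}^n\frac{T_i}{\hat e(X_i)}d_{kc}^2(\nu,F_i)$, with population target $\nu_0:=\mathbb{E}_\oplus[F]$ minimizing $M(\nu)=\mathbb{E}[d_{kc}^2(\nu,F)]$ (the missing-at-random calculation already shows $\mathbb{E}[\tfrac{T}{e}d_{kc}^2(\nu,F)]=M(\nu)$). The decisive simplification is that the Fr\'echet function is \emph{exactly} quadratic: since $\Psi(\nu_0)=\mathbb{E}[\Psi(F)]$ by Proposition \ref{prp:F-mean-embed}(i) and $\Psi$ is an isometry onto the convex set $\Psi(K_{kc}^B(\mathbb{R}^d))$ (Lemma \ref{lem:d}(iv)), expanding in $L^2(\mathbb{S}^{d-1})$ gives $M(\nu)-M(\nu_0)=\|\Psi(\nu)-\mathbb{E}[\Psi(F)]\|_{2,\mathbb{S}^{d-1}}^2=d_{kc}^2(\nu,\nu_0)$. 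Thus the quadratic-growth (curvature) condition that is usually the hard step in Fr\'echet-type rate proofs holds here for free with unit constant, and the problem collapses to bounding a centered empirical process. First I would fix a high-probability event on which $\hat e\ge\eta_0/2$ (using (i) and (ii)), so that $T/\hat e$ is uniformly bounded.

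Writing $g_\nu(F)=d_{kc}^2(\nu,F)-d_{kc}^2(\nu_0,F)$, which is \emph{affine} in $\Psi(F)$ with slope of norm of order $d_{kc}(\nu,\nu_0)$, the basic M-estimation inequality $M_n(\hat{\mathbb{E}}_\oplus[F])\le M_n(\nu_0)$ combined with the quadratic curvature yields $d_{kc}^2(\hat{\mathbb{E}}_\oplus[F],\nu_0)\le\sup_{d_{kc}(\nu,\nu_0)\le\delta}|\mathbb{G}_n(\nu)|$ on the event $d_{kc}(\hat{\mathbb{E}}_\oplus[F],\nu_0)\le\delta$, where $\mathbb{G}_n(\nu)=\{M_n(\nu)-M(\nu)\}-\{M_n(\nu_0)-M(\nu_0)\}$. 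I would then split $\mathbb{G}_n$ through $\tfrac1{\hat e}=\tfrac1e+(\tfrac1{\hat e}-\tfrac1e)$ into (a) an \emph{oracle} empirical process $\frac1n\sum_i\frac{T_i}{e(X_i)}g_\nu(F_i)-\mathbb{E}[g_\nu(F)]$ built from the true propensity, (b) the conditional mean of the propensity-error term, $\mathbb{E}[\frac{e-\hat e}{\hat e}g_\nu(F)\mid\hat e]$, and (c) the remaining centered propensity-error fluctuation.

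For (a) I would apply a localized maximal inequality: because $g_\nu$ is affine in $\Psi(F)$ and $F\in K_{kc}^B(\mathbb{R}^d)$, the class $\{g_\nu:d_{kc}(\nu,\nu_0)\le\delta\}$ has envelope and $L^2$-radius of order $\delta$ and inherits the Bronshtein-type entropy $\log N(\varepsilon,K_{kc}^B(\mathbb{R}^d),d_{kc})\lesssim\varepsilon^{-(d-1)/2}$ quoted before Proposition \ref{prp:GFR-rate}, giving modulus $\lesssim n^{-1/2}\delta^{1-(d-1)/4}$; finiteness of $\int_0^1\varepsilon^{-(d-1)/4}d\varepsilon$ is exactly the constraint $d\le4$. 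For (b), condition (i) and $\sup_x|\hat e-e|=O_p(\rho_n)$ give $\sup_{d_{kc}(\nu,\nu_0)\le\delta}|\mathbb{E}[\frac{e-\hat e}{\hat e}g_\nu\mid\hat e]|\lesssim\rho_n\delta$, which after the mild polynomial slack that yields an arbitrary exponent $\varpi_2<1$ produces the $\rho_n^{\varpi_2}$ term. For (c), I would control the fluctuation \emph{uniformly over} $\hat e$ in a shrinking ball around $e$, treating it as a two-parameter empirical process indexed jointly by $\nu\in K_{kc}^B(\mathbb{R}^d)$ and by the propensity function; the entropy of the propensity index is supplied by condition (iii) (exponent $\varpi_1$), and combining it with the Bronshtein entropy shows the governing complexity exponent is $\max\{(d-1)/4,\varpi_1\}$ (for $d=1$ the metric entropy is only logarithmic, so $\varpi_1$ dominates).

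Finally I would assemble the pieces through the standard peeling/rate argument (e.g.\ van der Vaart--Wellner, Theorem 3.2.5): with quadratic curvature and the modulus $\phi_n(\delta)\lesssim n^{-1/2}\delta^{1-\max\{(d-1)/4,\varpi_1\}}+\rho_n^{\varpi_2}\delta$, the rate $\delta_n$ solves $\delta_n^2\asymp\phi_n(\delta_n)$, giving $\delta_n\asymp n^{-1/(2(1+\max\{(d-1)/4,\varpi_1\}))}+\rho_n^{\varpi_2}$, as claimed. I expect the main obstacle to be step (c): the estimated propensity $\hat e$ is data-dependent and lives in an infinite-dimensional class, so the propensity-error fluctuation cannot be handled by a plug-in bound but requires a genuinely uniform empirical-process argument marrying the metric entropy of $(K_{kc}^B(\mathbb{R}^d),d_{kc})$ with condition (iii); extracting the sharp combined exponent $\max\{(d-1)/4,\varpi_1\}$ and cleanly isolating the $\rho_n^{\varpi_2}$ bias from this coupled process is the delicate bookkeeping.
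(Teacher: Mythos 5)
Your proposal is correct in substance, but it takes a genuinely different (and far more self-contained) route than the paper. The paper's own proof of Proposition \ref{prp:IPW-conv} is a two-step reduction: it first peels off the normalization factor $\bigl({1 \over n}\sum_{i=1}^n T_i/\hat{e}(X_i)\bigr)^{-1}$, bounding its contribution by $O_p(\rho_n)+O_p(n^{-1/2})$ using conditions (i)--(ii) and the uniform bound $\sup_{p}|s(p,F)|\leq B$, and then reduces the remaining term to $d_{kc}(\hat{\Theta}_1^{(\mathrm{IPW})},\mathbb{E}_\oplus[F])$ for the unnormalized estimator in \eqref{eq:IPW}, whose rate is obtained by citing Theorem A.4 of \cite{KuZhOtMu24} (this is where condition (iii) and the exponent $\max\{(d-1)/4,\varpi_1\}$ actually enter). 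What you have done is essentially reconstruct the content of that cited theorem from scratch: the exact quadratic expansion $M(\nu)-M(\nu_0)=d_{kc}^2(\nu,\nu_0)$ via the isometric embedding (which is indeed how the paper verifies the curvature condition in the proof of Proposition \ref{prp:GFR-rate}), the decomposition of the weighted empirical process into an oracle part, a propensity-bias part, and a joint fluctuation part, the transfer of the Bronshtein entropy bound to the localized class of affine functionals of $\Psi(F)$, and the peeling step producing $\delta_n\asymp n^{-1/(2(1+\max\{(d-1)/4,\varpi_1\}))}+\rho_n^{\varpi_2}$. Your identification of $d\leq 4$ with integrability of $\varepsilon^{-(d-1)/4}$ and of the $d=1$ case being governed by $\varpi_1$ matches the paper's discussion. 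The trade-off is transparency versus rigor-by-reference: your route makes visible exactly where each assumption is used and why the combined exponent is a maximum, but the delicate step you correctly flag --- the uniform control of the fluctuation jointly over $\nu$ and the data-dependent $\hat{e}$ --- is only sketched, whereas the paper sidesteps it entirely by delegating to \cite{KuZhOtMu24}. One small presentational difference: you work with the argmin characterization of $\hat{\mathbb{E}}_\oplus[F]$ directly, which already absorbs the normalization constant, so the paper's first decomposition step is unnecessary in your framework.
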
 
If the propensity score $e(x)$ belongs to a class of parametric models $\mathcal{M}_e=\{e(x;\varphi): \varphi \in \Phi\}$ with a compact parameter space $\Phi \subset \mathbb{R}^q$, Condition (iii) is satisfied with $\int_0^1 \sqrt{\log N(\delta \varepsilon, B_{\delta'}(e), \|\cdot\|_\infty)}d\varepsilon = O(-\log\delta)$ as $\delta \to 0$. In this case, it can be shown that convergence rate of $\hat{\mathbb{E}}_\oplus[F]$ is $O_p(n^{-{1 \over 2(\alpha-1)}})$ for any $\alpha >2$ when $d=1$ and $O_p(n^{-{1 \over 2(1 + (d-1)/4)}})$ when $d \in \{2,3,4\}$. For details on this point, see, for example, Section 4.1 of \cite{KuZhOtMu24}. 

In the recent literature, metric statistics has been extended to conduct causal inference for outcomes situated in a general metric space; see \cite{KuZhOtMu24}, \cite{ZhKuOtMu25}, \cite{KuZhOtMu25a, KuZhOtMu25b}, and \cite{BhLiWuXu25}. In sum, metric statistics can be a useful toolkit for econometric analysis on non-Euclidean economic data including SVRVs.





\newpage
\appendix

\section{Proofs}

An SVRV $F$ is called integrably bounded if $\sup\{\|f\|: f\in F\}$ has finite expectation.

\subsection{Proof of Lemma \ref{lem:d}}

\subsubsection{Proof of (i)} 

For any $F_1,F_2,F_3 \in K_{kc}(\mathbb{R}^d)$, it suffices to verify the following conditions. (a) $d_{kc}(F_1, F_2) = 0 \Leftrightarrow F_1 = F_2$, (b) $d_{kc}(F_1, F_2) = d_{kc}(F_2, F_1)$, and (c) $d_{kc}(F_1, F_2) \leq d_{kc}(F_1,F_3) + d_{kc}(F_3, F_2)$. Conditions (b) and (c) follow immediately from the definition of $d_{kc}$. Now we verify Condition (a). Observe that $d_{kc}(F_1, F_2) = 0 \Leftrightarrow s(p, F_1) \equiv s(p, F_2)$ for almost all $ p \in \mathbb{S}^{d-1}$ and this implies $s(p, F_1) = s(p, F_2)$ for all $p \in \mathbb{S}^{d-1}$. Indeed, define $h(p) = s(p,F_1) - s(p,F_2)$ and assume that $s(p,F_1)=s(p,F_2)$ for all $p \in \mathbb{S}^{d-1}$. If there exists a point $p_0 \in \mathbb{S}^{d-1}$ such that $h(p_0) \neq 0$, then Lemma \ref{lem:sf-conti} yields that there exists a neighborhood $U(p_0)$ of $p_0$ such that $h(p) \neq 0$ on $U(p_0)$. This contradicts the assumption. Then we have $s(p, F_1) = s(p, F_2)$ for all $p \in \mathbb{S}^{d-1}$. Since $F_1$ and $F_2$ are closed convex sets, we obtain $F_1 = F_2$. 

\subsubsection{Proof of (ii)}

First, we verify $\lim_{n \to \infty}d_H(F_n, F_0) = 0 \Rightarrow \lim_{n \to \infty}d_{kc}(F_n, F_0) = 0$. For any bounded closed convex sets, $F,G$, we have $d_{kc}(F, G) \leq \sqrt{\mathrm{vol}(\mathbb{S}^{d-1})}\sup_{p \in \mathbb{S}^{d-1}}|s(p,F) - s(p,G)|$. Then Lemmas \ref{lem:Hd-sf-conv}  and \ref{lem:bcc-set-conv} yield $\lim_{n \to \infty}d_H(F_n, F_0) = 0 \Rightarrow \lim_{n \to \infty}d_{kc}(F_n, F_0) = 0$. 

Next, we verify $\lim_{n \to \infty}d_{kc}(F_n,F_0) = 0 \Rightarrow \lim_{n \to \infty}d_H(F_n,F_0) = 0$. Note that $\sup_{p \in \mathbb{S}^{d-1}}|s(p,F_n)| \leq \sup_n(\sup_{f \in F_n}\|f\|)\leq B$ and hence the sequence $\{s(\cdot, F_n)\}_{n \geq 1}$ is uniformly bounded. Moreover, from the proof of Lemma \ref{lem:sf-conti}, we have $\sup_n|s(u,F_n) - s(v,F_n)| \leq B\|u-v\|$ for any $u,v \in \mathbb{S}^{d-1}$, and hence the sequence $\{s(\cdot, F_n)\}_{n \geq 1}$ is equicontinuous. Let $\{F_{n_k}\}_{k\geq 1}$ be any subsequence of $\{F_n\}_{\geq 1}$. Then from Arzel\'a-Ascoli theorem, there exist a subsequence $\{s(\cdot, F_{n_{k_\ell}})\}_{\ell \geq 1}$ and continuous function $t(\cdot)$ on $\mathbb{S}^{d-1}$ such that $\lim_{\ell \to \infty}\sup_{p \in \mathbb{S}^{d-1}}|s(p,F_{n_{k_\ell}}) - t(p)| = 0$. This yields $\lim_{\ell \to \infty}\|s(\cdot,F_{n_{k_\ell}}) -t(\cdot)\|_{2,\mathbb{S}^{d-1}} \leq \sqrt{\mathrm{vol}(\mathbb{S}^{d-1})}\lim_{\ell \to \infty}\sup_{p \in \mathbb{S}^{d-1}}|s(p,F_{n_{k_\ell}}) - t(p)| = 0$. Then we have $\|s(\cdot,F_0) -t(\cdot)\|_{2,\mathbb{S}^{d-1}}
\leq \lim_{\ell \to \infty}\|s(\cdot,F_0) -s(\cdot, F_{n_{k_\ell}})\|_{2,\mathbb{S}^{d-1}} + \lim_{\ell \to \infty}\|s(\cdot,F_{n_{k_\ell}}) -t(\cdot)\|_{2,\mathbb{S}^{d-1}} = 0$. This yields $s(p, F_0) = t(p)$ for all $p \in \mathbb{S}^{d-1}$.  Indeed, applying the same argument in the proof of Lemma \ref{lem:d} (i), we have $s(\cdot, F_0) = t(\cdot)$. Then we have $\lim_{\ell \to \infty}\sup_{p \in \mathbb{S}^{d-1}}|s(p,F_{n_{k_\ell}}) - s(p,F_0)| = 0$, which yields $\lim_{n \to \infty}\sup_{p \in \mathbb{S}^{d-1}}|s(p,F_n) - s(p,F_0)| = 0$. Then from Lemma \ref{lem:Hd-sf-conv}, $\lim_{n \to \infty}d_{kc}(F_n,F_0) = 0 \Rightarrow \lim_{n \to \infty}d_H(F_n,F_0) = 0$ and Lemma \ref{lem:bcc-set-conv} yields that $F_0$ is a bounded closed convex set. 

\subsubsection{Proof of (iii)}

The conclusion follows from Lemma \ref{lem:d} (ii) and the fact that $K_{kc}(\mathbb{R}^d)$ is closed with respect to $d_H$ (Lemma \ref{lem:bcc-set-conv}). 

\subsubsection{Proof of (iv)}

It is easy to verify the convexity. Now we show the closedness. 

\noindent
\textbf{(Step 1)} For a sequence of sets $\{F_n\} \subset K_{kc}^B(\mathbb{R}^d)$, assume that $\lim_{n \to \infty}\|s(\cdot,F_n) -t(\cdot)\|_{2,\mathbb{S}^{d-1}} = 0$. From the proof of Lemma \ref{lem:d} (ii), we have that $t(\cdot)$ is a continuous function with $\sup_{p \in \mathbb{S}^{d-1}}|t(p)|\leq \sup_n(\sup_{f \in F_n}\|f\|)\leq B$ and $\lim_{k \to \infty}\sup_{p \in \mathbb{S}^{d-1}}|s(p,F_{n_k}) - t(p)| = 0$ for some subsequence $\{s(\cdot, F_{n_k})\}_{k \geq 1}$. 

\noindent
\textbf{(Step 2)} Now we verify that $t(\cdot)$ is a support function of a bounded closed convex set $F \subset \mathbb{R}^d$. For this, we first extend $s(\cdot, F_{n_k})$ on $\mathbb{S}^{d-1}$ to a function on $\mathbb{R}^d$. Define 
\[
\tilde{s}(x, F_{n_k}) = 
\begin{cases}
\|x\|s\left({x \over \|x\|}, F_{n_k}\right) & x \neq 0\\
0 & x = 0 
\end{cases}.
\]
We also define $\tilde{t}: \mathbb{R}^d \to \mathbb{R}$ is the same way. Then $\lim_{k \to \infty}\sup_{p \in \mathbb{S}^{d-1}}|s(p,F_{n_k}) - t(p)| = 0$ implies $\lim_{k \to \infty}\sup_{x:\|x\|\leq R}|\tilde{s}(x,F_{n_k}) - \tilde{t}(x)| = 0$ for any $R>0$. 

Now we verify (a) positive homogeneity and (b) convexity of $\tilde{t}(\cdot)$ that characterize support functions \citep{Sc14}. For (a), pick any $\lambda \geq 0$ and $x \in \mathbb{R}^d$. We have 
\begin{align*}
|\tilde{t}(\lambda x) - \lambda \tilde{t}(x)| \leq |\tilde{t}(\lambda x) - \tilde{s}(\lambda x, F_{n_k})| + |\tilde{s}(\lambda x, F_{n_k}) - \lambda\tilde{s}(x, F_{n_k})| + |\lambda \tilde{s}(x, F_{n_k}) - \lambda \tilde{t}(x)| \to 0,
\end{align*} 
as $k \to \infty$, which implies (a) as $\tilde{t}(\lambda x) = \lambda \tilde{t}(x)$. For (b), if $\tilde{t}$ is subadditive (i.e., $\tilde{t}(x+y) \leq \tilde{t}(x) + \tilde{t}(y)$ for any $x, y \in \mathbb{R}^d$), then from the positive homogeneity of $\tilde{t}$, we can see that $\tilde{t}$ is convex. Now we verify the subadditivity of $\tilde{t}$. Note that for each $k$, $\tilde{s}(\cdot, F_{n_k})$ is subadditive. Then we have $0 \leq \tilde{s}(x, F_{n_k}) + \tilde{s}(y, F_{n_k})  - \tilde{s}(x+y, F_{n_k})$. This yields $0 \leq \tilde{t}(x) + \tilde{t}(y)  - \tilde{t}(x+y)$, which implies the subadditivity of $\tilde{t}$, and we obtain (b).

\noindent
\textbf{(Step 3)} Now we verify that $\tilde{t}$ is a support function of a bounded closed convex set $F$. Define $F = \bigcap_{p \in \mathbb{S}^{d-1}}\left\{x \in \mathbb{R}^d: \langle p, x \rangle \leq  \tilde{t}(p)\right\}$. Then we can see that $F$ is a closed convex set and $\tilde{t}(\cdot)$ is a support function of $F$. Moreover, for any $x \in F$ with $x \neq 0$, $\|x\|=\langle x/\|x\|, x \rangle \leq t(x/\|x\|) \leq \sup_n (\sup_{f \in F_n}\|f\|) \leq B$. Then $F$ is a bounded closed convex set. In particular $\tilde{t}(\cdot) \equiv s(\cdot, F)$, $F \in K_{kc}^B(\mathbb{R}^d)$. 

\subsection{Proof of Proposition \ref{prp:F-mean-embed}}

\subsubsection{Proof of (i)}

Note that $\Psi(Y) \in \Psi(\mathcal{M})$, which is a closed convex set (Assumption \ref{ass:1} (ii)). Then from Lemma \ref{lem:convex-hilbert-exp}, we have $\mathbb{E}[\Psi(Y)] \in \Psi(\mathcal{M})$. Since $\Psi$ is injective (Assumption \ref{ass:1} (i)) and $\mathbb{E}[\Psi(Y)] \in \Psi(\mathcal{M})$, $\Psi^{-1}(\mathbb{E}[\Psi(Y)])$ is well defined. From the definition of $\mathbb{E}[\Psi(Y)]$, we have $\mathbb{E}[\langle h, \Psi(Y) \rangle_\mathcal{H}] = \langle h, \mathbb{E}[\Psi(Y)] \rangle_\mathcal{H}$ for $h \in \mathcal{H}$. This yields 
\begin{align*}
\mathbb{E}[\Psi(Y)] 
&= \mathrm{argmin}_{h \in \mathcal{H}}\mathbb{E}[\|h - \Psi(Y)\|_\mathcal{H}^2]
= \mathrm{argmin}_{h \in \Psi(\mathcal{M})}\mathbb{E}[\|h - \Psi(Y)\|_\mathcal{H}^2]. 
\end{align*}
Thus, we have $\mathbb{E}[\|\mathbb{E}[\Psi(Y)] - \Psi(Y)\|_\mathcal{H}^2] 
\leq \mathbb{E}[\|\Psi(\nu) - \Psi(Y)\|_\mathcal{H}^2]$ for any $\nu \in \mathcal{M}$. By the isometry of $\Psi$ (Assumption \ref{ass:1} (i)), we have $\mathbb{E}\left[d^2\left(\Psi^{-1}(\mathbb{E}[\Psi(Y)]), Y\right)\right] \leq \mathbb{E}[d^2(\nu,Y)]$ for any $\nu \in \mathcal{M}$. Therefore, we obtain $\mathbb{E}_\oplus[Y] = \Psi^{-1}(\mathbb{E}[\Psi(Y)])$. 

\subsubsection{Proof of (ii)}

Applying the same argument in the proof of (i) to the conditional distribution of $Y$ given $X$, we have $\Psi(\mathbb{E}_\oplus[Y|X]) = \mathbb{E}[\Psi(Y)|X]$. Then we have $\mathbb{E}[\Psi(\mathbb{E}_\oplus[Y|X])] = \mathbb{E}[\Psi(Y)] = \Psi(\mathbb{E}_\oplus[Y])$, where the last equality follows from (i). 

\subsection{Proof of Proposition \ref{prp: SVRV-exp-cexp}}

\subsubsection{Proof of (i) and (ii)}

Note that $\Psi(K_{kc}^B(\mathbb{R}^d)) \subset L^2(\mathbb{S}^{d-1})$ is closed and convex (Lemma \ref{lem:d} (iv)).  Then from Proposition \ref{prp:F-mean-embed} (i), we have $s(\cdot, \mathbb{E}_\oplus[F])=\Psi(\mathbb{E}_\oplus[F])=\mathbb{E}[\Psi(F)] = \mathbb{E}[s(\cdot, F)]$. Since $F$ an integrably bounded SVRV, Lemma \ref{lem:int-bdd-SVRV} yields $s(\cdot, \mathbb{E}_\oplus[F]) \equiv s(\cdot, \mathbb{E}[F])$, where $\mathbb{E}[F]$ is the Aumann mean of $F$. Then Lemma \ref{lem:sf-cc-unique} yields $\mathbb{E}_\oplus[F] = \mathbb{E}[F]$. 

\subsubsection{Proof of (iii)}

The first statement follows by applying a similar argument to the proof of (i). Note that $\mathbb{E}[F|X] \in K_{kc}^B(\mathbb{R}^d)$ and $\mathbb{E}[F] = \mathbb{E}[\mathbb{E}[F|X]]$. Thus, from (i) and  Proposition \ref{prp:F-mean-embed} (ii), we have
\[
\Psi(\mathbb{E}_\oplus[\mathbb{E}_\oplus[F|X]]) =\mathbb{E}[\Psi(\mathbb{E}[F|X])] = s(\cdot, \mathbb{E}[\mathbb{E}[F|X]]) = s(\cdot, \mathbb{E}[F]) =\Psi(\mathbb{E}_\oplus[F]),
\]
which yields $\mathbb{E}_\oplus[F] = \mathbb{E}_\oplus[\mathbb{E}_\oplus[F|X]]$.

\subsection{Proof of Corollary \ref{cor:sample-F-A-exp}}
From Proposition \ref{prp:F-mean-embed}, we have $s(\cdot, \mu_{\oplus,n})=\Psi(\mu_{\oplus,n})={1 \over n}\sum_{i=1}^n\Psi(F_i) = {1 \over n}  \sum_{i=1}^n s(\cdot, F_i)$. Thus, Lemma \ref{lem:sf-add-conv} (iii) yields $s(\cdot, \mu_{\oplus,n}) = s\left(\cdot,  {1 \over n} \bigoplus_{i=1}^n F_i \right)$, and Lemma \ref{lem:sf-cc-unique} implies $\mu_{\oplus,n} = {1 \over n}  \bigoplus_{i=1}^n  F_i$. 

\subsection{Proof of Proposition \ref{prp:F-Reg-solution}}

First, we verify that $m_\oplus(x)$ is the GFR function. Note that $\widetilde{m}_{\oplus,\Psi}(x)$ exists and is unique from Lemmas \ref{lem:d} and \ref{lem:A1}. From the definition of $\widetilde{m}_{\oplus,\Psi}(x)$, we have
\begin{align*}
\mathbb{E}[w(x,X)d_{kc}^2(\nu, F)]
&= \|\Psi(\nu) -\mathbb{E}[w(x,X)\Psi(F)]\|_{2,\mathbb{S}^{d-1}}^2\\
&\quad + \int_{\mathbb{S}^{d-1}}\mathbb{E}[w(x,X)s^2(p,F)]dp-\|\mathbb{E}[w(x,X)\Psi(F)]\|_{2,\mathbb{S}^{d-1}}^2\\
&\geq \|\widetilde{m}_{\oplus,\Psi}(x) - \mathbb{E}[w(x,X)\Psi(F)]\|_{2,\mathbb{S}^{d-1}}^2\\
&\quad + \int_{\mathbb{S}^{d-1}}\mathbb{E}[w(x,X)s^2(p,F)]dp-\|\mathbb{E}[w(x,X)\Psi(F)]\|_{2,\mathbb{S}^{d-1}}^2\\
&=\mathbb{E}[w(x,X)d_{kc}^2(\Psi^{-1}(\tilde{m}_{\oplus,\Psi}(x)), F)],
\end{align*}
for any $\nu \in K_{kc}^B(\mathbb{R}^d)$. This yields $\Psi^{-1}(\widetilde{m}_{\oplus,\Psi}(x)) \in \mathrm{argmin}_{\nu \in K_{kc}^B(\mathbb{R}^d)}\mathbb{E}[w(x,X)d_{kc}^2(\nu,F)]$ so that $\Psi^{-1}(\widetilde{m}_{\oplus,\Psi}(x))$ is the unique GFR function. 

Next, we verify $m_\oplus(x) \subset \mathbb{E}[w(x,X)F]$. Note that $w(x,X)F \in K_{kc}^{B'}(\mathbb{R}^d)$ where $B'=B\sup_{z \in \mathcal{X}}|w(x,z)|$. Then Lemma \ref{lem:d} (iv), Proposition \ref{prp:F-mean-embed} (i), and Proposition \ref{prp: SVRV-exp-cexp} (ii) yield $\mathbb{E}[w(x,X)F] = \mathbb{E}_\oplus[w(x,X)F] \in K_{kc}^{B'}(\mathbb{R}^d) \subset K_{kc}(\mathbb{R}^d)$. Since $m_{\oplus,\Psi}(x) \in \Psi(K_{kc}(\mathbb{R}^d))$, $m_{\oplus,\Psi}(x) = \widetilde{m}_{\oplus,\Psi}(x)$ is the support function of $m_\oplus(x)=\Psi^{-1}(m_{\oplus,\Psi}(x)) \in K_{kc}(\mathbb{R}^d)$ and hence (\ref{ineq:GFR-BLP}) yields $m_{\oplus,\Psi}(x)=s(\cdot,m_\oplus(x))\leq s(\cdot, \mathbb{E}[w(x,X)F])$. Therefore, from Lemma \ref{lem:sf-ineq}, we have $m_\oplus(x) \subset \mathbb{E}[w(x,X)F]$. 

\subsection{An example such that $\tilde{m}_{\oplus,\Psi}(x) < \Psi(\mathbb{E}[w(x,X)F])$}\label{subsec:app-counter-exm}
Let $(F,X) \in \{[-1,2], [1,6], \{0\}\} \times \{-2,0,2\}$ and let
\begin{align*}
&\mathbb{P}(F=[-1,2], X=-2)=\mathbb{P}(F=[1,6], X=2)=1/4,\\
&\mathbb{P}(F \neq [-1,2], X=-2)=\mathbb{P}(F \neq [1,6], X=2)=0,\\
&\mathbb{P}(F = \{0\},X=0)=1/2,\quad \mathbb{P}(F \neq \{0\}, X=0)=0.
\end{align*}
Note that $\mathbb{S}^0=\{-1,1\}$. Define $g_1(p)=\mathbb{E}[w(2,X)s(p,F)]$ and $g_2(p)=s(p, \mathbb{E}[w(2,X)F])$. Then we have $g_1(p)< g_2(p)$ for $p \in \mathbb{S}^0$. Indeed,
\begin{align*}
g_1(p) & = -{1 \over 4}s(p,[-1,2]) + {3 \over 4}s(p,[1,6])=
\begin{cases}
4 & \text{if}\ p=1\\ 
-1 & \text{if}\ p=-1
\end{cases},\\
g_2(p) &= s\left(p, -{1 \over 4}[-1,2]\oplus {3 \over 4}[1,6]\right)=s\left(p,\left[{1 \over 4},{19 \over 4}\right]\right)=
\begin{cases}
{19 \over 4} & \text{if}\ p=1\\
-{1 \over 4} & \text{if}\ p=-1
\end{cases}.
\end{align*}
Note that $\|g\|_{2,\mathbb{S}^0}^2=g^2(-1) + g^2(1)$. Then we have $0 = \|g_1-s(\cdot,[1,4])\|_{2,\mathbb{S}^0}^2 <\|g_1 - g_2\|_{2,\mathbb{S}^0}^2=18/16$. This implies $s(\cdot,[1,4])=\tilde{m}_{\oplus,\Psi}(2) \neq \Psi(\mathbb{E}[w(2,X)F])=s(\cdot,[1/4,19/4])$. Likewise, we have $\tilde{m}_{\oplus,\Psi}(0) = \Psi(\mathbb{E}[w(0,X)F])=s(\cdot, \mathbb{E}[F])=s(\cdot,[0,2])$ and $s(\cdot,[-1,0])=\tilde{m}_{\oplus,\Psi}(-2) \neq \Psi(\mathbb{E}[w(-2,X)F])=s(\cdot,[-9/4,5/4])$.

\subsection{Proof of Proposition \ref{prp:GFR-rate}}

We first show the result for $d\geq 2$ and then show the result for $d=1$. 

\subsubsection{Proof for the case $d \geq 2$}
Define 
\[
Q(\nu,x)=\mathbb{E}[w(x,X)d_{kc}^2(\nu,F)],\quad  Q_n(\nu,x)={1 \over n}\sum_{i=1}^n\hat{w}(x,X_i)d_{kc}^2(\nu,F_i).
\]
Following the proof of Theorem 2 in \cite{PeMu19}, it suffices to verify the following conditions. 
\begin{itemize}
\item[(a)] Almost surely, for all $\|x\| \leq B_0$, the objects $m_\oplus(x)$ and $\hat{m}_\oplus(x)$ exist and are unique. Additionally, for any $\varepsilon>0$, 
\[
\inf_{\|x\|\leq B_0}\inf_{d_{kc}(\nu,m_\oplus(x))>\varepsilon}Q(\nu,x) - Q(m_\oplus(x),x)>0,
\]
and there exists $\zeta = \zeta(\varepsilon)>0$ such that
\[
\lim_{n \to \infty}\mathbb{P}\left(\inf_{\|x\|\leq B_0}\inf_{d_{kc}(\nu,\hat{m}_\oplus(x))>\varepsilon}Q_n(\nu,x) - Q_n(\hat{m}_\oplus(x),x) \geq \zeta\right) = 1.
\]
\item[(b)] Let $B_\delta(m_\oplus(x)) \subset K_{kc}^B(\mathbb{R}^d)$ be the ball of radius $\delta$ centered at $m_\oplus(x)$ and $N(\varepsilon, B_\delta(m_\oplus(x)),d_{kc})$ be its covering number using ball of size $\varepsilon$. As $\delta \to 0$, it holds
\[
\int_0^1 \sup_{\|x\|\leq B_0}\sqrt{1 + \log N(\delta\varepsilon, B_\delta(m_\oplus(x)),d_{kc})}d\varepsilon = O(\delta^{-{d-1 \over 4}}). 
\]
\item[(c)] There exist $\tau>0$ and $D>0$, possibly depending on $B_0$, such that
\[
\inf_{\|x\|\leq B_0}\inf_{d_{kc}(\nu,m_\oplus(x))<\tau}\{Q(\nu,x) - Q(m_\oplus(x),x) - Dd_{kc}^2(\nu,m_\oplus(x))\} \geq 0. 
\]
\end{itemize}

\subsubsection*{Verification of (a)}

Applying the same argument in the proof of Proposition \ref{prp:F-Reg-solution}, we can see that $m_\oplus(x)$ and $\hat{m}_\oplus(x)$ exist and are unique for all $\|x\| \leq B_0$, hence proving Condition (a). 

\subsubsection*{Verification of (b)}

For the remainder of this proof, for any $F \in K_{kc}^B(\mathbb{R}^d)$ and $\gamma >0$, $B_\gamma(F)$ refers to the $d_{kc}$ ball of radius $\gamma$ centered at $F$. Lemma 2.7.12 of \cite{vaWe23} implies $N(\varepsilon, K_{kc}^B(\mathbb{R}^d),d_{kc})
\leq e^{D\varepsilon^{-(d-1)/2}}$, where $D$ is independent of $\varepsilon$. Then for $F \in K_{kc}^B(\mathbb{R}^d)$ and $\delta \in (0,1)$, we can take a collection of compact convex sets in $K_{kc}(\mathbb{R}^d)$, $C_{\delta \varepsilon}(F)=\{g_u:u \in U\}$ with $U \subset \mathbb{R}$ such that $|U|=N(\delta\varepsilon, B_1(F),d_{kc})\leq e^{D(\delta\varepsilon)^{-(d-1)/2}}$. 
Then we have shown that $\sup_{F \in K_{kc}^B(\mathbb{R}^d)}\log N(\delta \varepsilon, B_\delta(F),d_{kc})\leq D(\delta\varepsilon)^{-(d-1)/2}$. Observe that $\sup_{\|x\|\leq B_0}\log N(\delta \varepsilon, B_\delta(m_\oplus(x)),d_{kc})\leq D(\delta\varepsilon)^{-(d-1)/2}$, so for any $\delta \in (0,1)$, we have
\[
\int_0^1 \sup_{\|x\|\leq B_0}\sqrt{1 + \log N(\delta\varepsilon, B_\delta(m_\oplus(x)),d_{kc})}d\varepsilon \leq \int_0^1\sqrt{1 + D(\delta\varepsilon)^{-(d-1)/2}}d\varepsilon = O(\delta^{-{d-1 \over 4}}). 
\]

\subsubsection*{Verification of (c)}


For $f,g \in L^2(\mathbb{S}^{d-1})$, let $\langle f, g\rangle_{2,\mathbb{S}^{d-1}} = \int_{\mathbb{S}^{d-1}} f(x)g(x)dx$. From Lemma \ref{lem:inn-prod}, $m_\oplus(x)$ is characterized by $\langle m_{\oplus,\Psi}(x)- \tilde{m}_{\oplus,\Psi}(x), h - \tilde{m}_{\oplus,\Psi}(x)\rangle_{2,\mathbb{S}^{d-1}} \leq 0$ for all $h \in \Psi(K_{kc}^B(\mathbb{R}^d))$ and $\|x\| \leq B_0$. Then we have 
\begin{align*}
Q(\nu,x) - Q(m_\oplus(x),x)
&= d_{kc}^2(\nu, m_\oplus(x)) - 2\langle m_{\oplus,\Psi}(x)- \tilde{m}_{\oplus,\Psi}(x), \Psi(\nu) - \tilde{m}_{\oplus,\Psi}(x)\rangle_{2,\mathbb{S}^{d-1}}\\
&\geq d_{kc}^2(\nu, m_\oplus(x)).
\end{align*}
Consequently, we can take $D=1$ and $\tau$ arbitrary to satisfy Condition (c). 

\subsubsection{Proof for the case $d=1$}

Verification of (a) and (c) is the same as in $d \geq 2$. One can see that $N(\varepsilon, K_{kc}^B(\mathbb{R}),d_{kc})
\leq D\varepsilon^{-2}$ where $D$ is independent of $\varepsilon$. By applying a similar argument to verify (b) when $d \geq 2$, we have 
\[
\int_0^1 \sup_{\|x\|\leq B_0}\sqrt{1 + \log N(\delta\varepsilon, B_\delta(m_\oplus(x)),d_{kc})}d\varepsilon = O(-\log \delta).
\]
Following the proof of Theorem 2 in \cite{PeMu19}, this yields $\sup_{\|x\|\leq B_0}d_{kc}(\hat{m}_\oplus(x),m_\oplus(x)) = O_p(n^{-{1 \over 2(\alpha-1)}})$ for any $\alpha>2$.

\subsection{Derivation of (\ref{eq:IPW})}\label{subsec:app-IPW}

For $\alpha, \beta \in K_{kc}^B(\mathbb{R}^d)$, let $\gamma_{\alpha,\beta}:[0,1] \to K_{kc}^B(\mathbb{R}^d)$ denote the geodesic from $\alpha$ to $\beta$ under the metric $d_{kc}$, and $\gamma_{\alpha,\beta}(\rho)$ be the end point of the (extended) geodesic $\rho \odot \gamma_{\alpha,\beta}$. See \cite{KuZhOtMu24} for a more detailed discussion on geodesic metric spaces and extensions of geodesics. Since $F \in K_{kc}^B(\mathbb{R}^d)$ can be isometrically embedded into $L^2(\mathbb{S}^{d-1})$ by using the map $\Psi(F)=s(\cdot,F)$, we have
\[
\gamma_{\{0\},F_i}\left({T_i \over \hat{e}(X_i)}\right) = \Psi^{-1}\left(\Psi(\{0\})+{T_i \over \hat{e}(X_i)}(\Psi(F_i) -\Psi(\{0\}))\right)
= \Psi^{-1}\left({T_i \over \hat{e}(X_i)}\Psi(F_i)\right)={T_i \over \hat{e}(X_i)}F_i.
\]
Therefore, we have
\begin{align*}
\hat{\Theta}_1^{(\mathrm{IPW})}=\mathrm{argmin}_{\nu \in K_{kc}^B(\mathbb{R}^d)}{1 \over n}\sum_{i=1}^nd_{kc}^2\left(\nu,\gamma_{\{0\},F_i}\left({T_i \over \hat{e}(X_i)}\right)\right) ={1 \over n} \bigoplus_{i=1}^n {T_i \over \hat{e}(X_i)} F_i. 
\end{align*} 

\subsection{Proof of Proposition \ref{prp:IPW-conv}}
Observe that 
\begin{align*}
&d_{kc}(\hat{\mathbb{E}}_\oplus[F],\mathbb{E}_\oplus[F])\\ 
&= \left\|s\left(\cdot, \left({1 \over n}\sum_{i=1}^n {T_i \over \hat{e}(X_i)}\right)^{-1}{1 \over n}\bigoplus_{i=1}^n {T_i \over \hat{e}(X_i)}F_i\right) - s(\cdot, \mathbb{E}_\oplus[F])\right\|_{2,\mathbb{S}^{d-1}}\\
&\leq {B \over \eta_0}\sqrt{\mathrm{vol}(\mathbb{S}^{d-1})}\left|\left({1 \over n}\sum_{i=1}^n {T_i \over \hat{e}(X_i)}\right)^{-1}-1\right| + \left\|s\left(\cdot, {1 \over n}\bigoplus_{i=1}^n {T_i \over \hat{e}(X_i)}F_i\right) - s(\cdot, \mathbb{E}_\oplus[F])\right\|_{2,\mathbb{S}^{d-1}}\\
&\leq {B \over \eta_0}\sqrt{\mathrm{vol}(\mathbb{S}^{d-1})}\left\{{1 \over \eta_0^2}\sup_{x \in \mathcal{X}}|\hat{e}(x) - e(x)| + \left|{1 \over n}\sum_{i=1}^n {T_i \over e(X_i)}-1\right|\right\}(1 + o_p(1)) + d_{kc}(\hat{\Theta}_1^{(\mathrm{IPW})}, \mathbb{E}_\oplus[F])\\
&= O_p(\rho_n) + O_p(n^{-1/2}) + d_{kc}(\hat{\Theta}_1^{(\mathrm{IPW})}, \mathbb{E}_\oplus[F]). 
\end{align*}
Therefore, the conclusion follows from a similar argument in the proof of Theorem A.4 in \cite{KuZhOtMu24}. 

\section{Auxiliary results}

We first present some results on Hilbert space.

\begin{lemma}\label{lem:convex-hilbert-exp}
If $C$ is a nonempty closed convex subset of a Hilbert space $\mathcal{H}$ and $Z$ is a random element taking values in $C$ with $\mathbb{E}[\|Z\|_\mathcal{H}]<\infty$, then $\mathbb{E}[Z] \in C$.
\end{lemma}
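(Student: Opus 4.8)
The plan is to exploit the fact that a nonempty closed convex set in a Hilbert space is the intersection of all closed half-spaces containing it, and to combine this with a separation argument. This is natural here because the defining property of $\mathbb{E}[Z]$ is precisely an identity between linear functionals, so half-space membership of $\mathbb{E}[Z]$ can be read off directly from the expectation identity.

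First I would confirm that $\mathbb{E}[Z]$ is well defined in the sense given in the paper. The map $h \mapsto \mathbb{E}[\langle h, Z\rangle_\mathcal{H}]$ is linear, and by Cauchy--Schwarz together with the hypothesis $\mathbb{E}[\|Z\|_\mathcal{H}]<\infty$ it satisfies $|\mathbb{E}[\langle h, Z\rangle_\mathcal{H}]| \leq \|h\|_\mathcal{H}\,\mathbb{E}[\|Z\|_\mathcal{H}]$, so it is a bounded linear functional on $\mathcal{H}$. By the Riesz representation theorem there is a unique $m = \mathbb{E}[Z] \in \mathcal{H}$ with $\langle h, m\rangle_\mathcal{H} = \mathbb{E}[\langle h, Z\rangle_\mathcal{H}]$ for every $h \in \mathcal{H}$.

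Next I would argue by contradiction. Suppose $m \notin C$. Since $C$ is nonempty, closed, and convex while $\{m\}$ is compact and disjoint from $C$, the Hahn--Banach separation theorem (equivalently, the Hilbert-space projection theorem) yields $h \in \mathcal{H}$ and $c \in \mathbb{R}$ with $\langle h, x\rangle_\mathcal{H} \leq c < \langle h, m\rangle_\mathcal{H}$ for all $x \in C$. Because $Z$ takes values in $C$, we have $\langle h, Z\rangle_\mathcal{H} \leq c$ almost surely; this random variable is measurable (as $Z$ is a random element and $h$ is fixed) and integrable (again by Cauchy--Schwarz and $\mathbb{E}[\|Z\|_\mathcal{H}]<\infty$). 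Taking expectations and using the defining identity of $m$ gives $\langle h, m\rangle_\mathcal{H} = \mathbb{E}[\langle h, Z\rangle_\mathcal{H}] \leq c$, contradicting $c < \langle h, m\rangle_\mathcal{H}$. Hence $m \in C$.

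The well-definedness and integrability verifications are routine. The only step requiring genuine care is the strict separation of the single point $m$ from the closed convex set $C$, which is exactly where closedness and convexity enter in an essential way; I expect this to be the main (albeit classical) obstacle. I would therefore invoke the projection theorem to construct the separating functional, rather than a bare Hahn--Banach statement, since the projection characterization guarantees the strict inequality at $m$ that drives the contradiction.
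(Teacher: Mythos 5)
Your proposal is correct and follows essentially the same route as the paper: assume $\mathbb{E}[Z]\notin C$, strictly separate the point from the closed convex set $C$ via Hahn--Banach, and take expectations of the separating linear functional applied to $Z$ to reach a contradiction. The only additions are your explicit Riesz/integrability checks for the well-definedness of $\mathbb{E}[Z]$, which the paper leaves implicit.
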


\begin{proof}
Let $\mu_Z = \mathbb{E}[Z]$ be the expectation of $Z$ and suppose $\mu_Z \notin C$. Since $C$ is closed, there is an open ball $B_\delta(\mu_Z) = \{h \in \mathcal{H}: \|h - \mu_Z\|_\mathcal{H}<\delta\}$ such that $B_\delta(\mu_Z) \cap C = \emptyset$. By Hahn-Banach separation theorem, 
there exists an $h_0 \in \mathcal{H}$ and $a \in \mathbb{R}$ such that $\langle h_0, \mu_Z \rangle_\mathcal{H} < a \leq \langle h_0, h \rangle_\mathcal{H}$ for all $h \in C$. Since $a \leq \langle h_0, h \rangle_\mathcal{H}$ for all $h \in C$, we have $a \leq \mathbb{E}[\langle h_0, Z \rangle_\mathcal{H}]$. This implies $a \leq \langle h_0, \mu_Z \rangle_\mathcal{H}$ and contradicts with $\langle h_0, \mu_Z \rangle_\mathcal{H}<a$. 
\end{proof}

\begin{lemma}\label{lem:A1}
Let $C$ be a nonempty closed convex set of a Hilbert space $\mathcal{H}$. For any $y \in \mathcal{H}$, the projection $\pi_C(y) := \mathrm{argmin}_{h \in C} \|y-h\|_\mathcal{H}$ exists and is unique.
\end{lemma}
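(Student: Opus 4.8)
The plan is to recognize this as the classical Hilbert projection theorem and to prove existence and uniqueness separately, with the \emph{parallelogram law} serving as the workhorse in both parts. Recall that in any Hilbert space $\mathcal{H}$ the identity $\|a+b\|_\mathcal{H}^2 + \|a-b\|_\mathcal{H}^2 = 2\|a\|_\mathcal{H}^2 + 2\|b\|_\mathcal{H}^2$ holds; this is precisely the extra structure, beyond that of a general Banach space, that makes the argument go through, and it is the reason that closedness and convexity alone suffice (no compactness is needed).

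First I would establish existence. Set $\delta = \inf_{h \in C}\|y - h\|_\mathcal{H}$, which is finite and nonnegative since $C$ is nonempty, and choose a minimizing sequence $\{h_n\} \subset C$ with $\|y - h_n\|_\mathcal{H} \to \delta$. Applying the parallelogram law to $a = h_n - y$ and $b = h_m - y$ and rearranging gives
\[
\|h_n - h_m\|_\mathcal{H}^2 = 2\|h_n - y\|_\mathcal{H}^2 + 2\|h_m - y\|_\mathcal{H}^2 - 4\left\|\tfrac{1}{2}(h_n + h_m) - y\right\|_\mathcal{H}^2.
\]
Because $C$ is convex, the midpoint $\tfrac{1}{2}(h_n + h_m)$ lies in $C$, so the last term is at least $4\delta^2$; letting $n,m \to \infty$ forces the right-hand side to $2\delta^2 + 2\delta^2 - 4\delta^2 = 0$, so $\{h_n\}$ is Cauchy. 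Completeness of $\mathcal{H}$ then yields a limit $h^\star$, closedness of $C$ gives $h^\star \in C$, and continuity of the norm gives $\|y - h^\star\|_\mathcal{H} = \delta$, so $h^\star$ attains the infimum.

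For uniqueness I would suppose $h_1, h_2 \in C$ both satisfy $\|y - h_i\|_\mathcal{H} = \delta$ and run the same parallelogram computation with $a = h_1 - y$ and $b = h_2 - y$; convexity again bounds the midpoint term below by $4\delta^2$, forcing $\|h_1 - h_2\|_\mathcal{H}^2 \le 0$ and hence $h_1 = h_2$. I do not anticipate a genuine obstacle here, since the result is standard; the only crux is invoking the parallelogram law and the observation that convexity is exactly what keeps the midpoints inside $C$ so that the $\delta^2$ lower bound applies. This projection underlies the well-definedness and uniqueness of $\widetilde{m}_{\oplus,\Psi}(x)$ invoked in the proof of Proposition \ref{prp:F-Reg-solution}, once combined with the closed convexity of $\Psi(K_{kc}^B(\mathbb{R}^d))$ established in Lemma \ref{lem:d} (iv).
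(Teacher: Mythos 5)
Your proof is correct, and for the existence part it takes a genuinely different route from the paper's. The paper argues existence by noting the minimizing sequence $\{h_n\}$ is bounded and then extracting a norm-convergent subsequence $h_{n_k}\to h^\ast$; you instead apply the parallelogram law to $h_n-y$ and $h_m-y$, use convexity to bound the midpoint term below by $\delta^2$, and conclude that the minimizing sequence is Cauchy, so that completeness of $\mathcal{H}$ and closedness of $C$ finish the argument. Your route is the standard Hilbert projection theorem proof and is the more robust one: in an infinite-dimensional Hilbert space a bounded sequence need not have a norm-convergent subsequence (closed balls are not compact), so the paper's subsequence extraction as written only delivers weak convergence and would need to be supplemented by weak closedness of $C$ and weak lower semicontinuity of the norm to close the gap, whereas your Cauchy argument needs no such repair and makes transparent exactly where convexity and completeness enter. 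For uniqueness the two arguments essentially coincide: the paper's strict convexity identity $t\|y-h_1\|_\mathcal{H}^2+(1-t)\|y-h_2\|_\mathcal{H}^2-\|t(y-h_1)+(1-t)(y-h_2)\|_\mathcal{H}^2=t(1-t)\|h_1-h_2\|_\mathcal{H}^2$ evaluated at $t=1/2$ is precisely your parallelogram computation applied to two putative minimizers.
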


\begin{proof} 
First, we show the existence of a minimizer. Define $r = \inf_{h \in C} \|y-h\|_\mathcal{H}$. Since $C$ is nonempty, $r$ is finite. Indeed, for any $h \in C$, it holds $r \leq \|y-h\|_\mathcal{H} <\infty$. Choose a minimizing sequence $\{h_n\}_{n \geq 1} \subset C$ such that $\|y-h_n\|_\mathcal{H} \to r$. This sequence is bounded. Indeed, for any $h_0 \in C$, we have $\|h_n\|_\mathcal{H} \leq \|h\|_\mathcal{H} + \|y-h_n\|_\mathcal{H} \leq \|y\|_\mathcal{H} + (r+1)$ for sufficiently large $n$. Hence $\{h_n\}_{n \geq 1}$ has a convergent subsequence $h_{n_k} \to h^\ast \in \mathcal{H}$. Since $C$ is closed, it holds $h^\ast \in C$. Then we have $\|y-h^*\|_\mathcal{H} = \lim_{k\to\infty}\|y-h_{n_k}\|_\mathcal{H} = r$ and $h^*$ attains the minimum. Thus, a minimizer exists.

Next, we show the uniqueness. Consider $f(h) = \|y-h\|_\mathcal{H}^2$, which is strictly convex on $\mathcal{H}$. Indeed, for any $h_1, h_2 \in \mathcal{H}$ and $t \in (0,1)$, it holds $f(th_1+(1-t)h_2) 
\leq t\|y-h_1\|_\mathcal{H}^2 + (1-t)\|y-h_2\|_\mathcal{H}^2$ and the equality holds if and only if $h_1 = h_2$. Indeed,
\begin{align*}
t\|y-h_1\|_\mathcal{H}^2 + (1-t)\|y-h_2\|_\mathcal{H}^2 - \|t(y-h_1)+(1-t)(y-h_2)\|_\mathcal{H}^2 
&= t(1-t)\|h_1-h_2\|_\mathcal{H}^2. 
\end{align*} 
Now suppose $h_1^\ast, h_2^\ast \in C$ are distinct minimizers. Then  $m=(h_1^\ast+h_2^\ast)/2 \in C$ by the convexity of $C$, but strict convexity of $f$ yields $\|y-m\|_\mathcal{H}^2 < {1 \over 2} \|y-h_1^\ast\|_\mathcal{H}^2 + {1 \over 2} \|y-h_2^\ast\|_\mathcal{H}^2 = r^2$, contradicting the minimality of $r$. Hence the minimizer is unique.
\end{proof}

\begin{lemma}\label{lem:inn-prod}
For a Hilbert space $\mathcal{H}$, let $C\subset\mathcal{H}$ be a nonempty closed convex set and $y \in \mathcal{H}$. Then for every $h\in C$, $\langle y-\pi_C(y),\, h-\pi_C(y)\rangle_\mathcal{H} \leq 0$.
\end{lemma}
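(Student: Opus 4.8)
The plan is to prove the variational characterization of the metric projection onto a closed convex set in a Hilbert space. This is the standard "obtuse angle" property: for any point $y$, the vector from the projection $\pi_C(y)$ to $y$ makes a non-acute angle with the vector from $\pi_C(y)$ to any other point $h$ of $C$. The natural route is a first-order (variational) argument exploiting the convexity of $C$: since $\pi_C(y)$ minimizes $\|y-\cdot\|_\mathcal{H}^2$ over $C$, perturbing it slightly toward any competitor $h\in C$ cannot decrease the objective, and letting the perturbation size go to zero extracts the inner-product inequality.

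**First I would** fix an arbitrary $h\in C$ and, using convexity of $C$, note that the segment $h_t=(1-t)\pi_C(y)+t\,h=\pi_C(y)+t\bigl(h-\pi_C(y)\bigr)$ lies in $C$ for every $t\in[0,1]$. By Lemma \ref{lem:A1}, $\pi_C(y)$ is the unique minimizer of $\|y-\cdot\|_\mathcal{H}$ over $C$, so in particular $\|y-\pi_C(y)\|_\mathcal{H}^2\le\|y-h_t\|_\mathcal{H}^2$ for all such $t$. The key computation is to expand the right-hand side:
\begin{align*}
\|y-h_t\|_\mathcal{H}^2
&= \|(y-\pi_C(y)) - t(h-\pi_C(y))\|_\mathcal{H}^2\\
&= \|y-\pi_C(y)\|_\mathcal{H}^2 - 2t\,\langle y-\pi_C(y),\, h-\pi_C(y)\rangle_\mathcal{H} + t^2\|h-\pi_C(y)\|_\mathcal{H}^2.
\end{align*}

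**Then I would** substitute this into the minimality inequality $\|y-\pi_C(y)\|_\mathcal{H}^2\le\|y-h_t\|_\mathcal{H}^2$, cancel the common term $\|y-\pi_C(y)\|_\mathcal{H}^2$, and rearrange to obtain
\[
2t\,\langle y-\pi_C(y),\, h-\pi_C(y)\rangle_\mathcal{H} \le t^2\|h-\pi_C(y)\|_\mathcal{H}^2
\]
for every $t\in(0,1]$. Dividing by $2t>0$ gives $\langle y-\pi_C(y),\, h-\pi_C(y)\rangle_\mathcal{H}\le \tfrac{t}{2}\|h-\pi_C(y)\|_\mathcal{H}^2$, and letting $t\downarrow 0$ forces the right-hand side to zero, yielding $\langle y-\pi_C(y),\, h-\pi_C(y)\rangle_\mathcal{H}\le 0$. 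Since $h\in C$ was arbitrary, this is the claim.

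**The main obstacle** is essentially conceptual rather than technical: the whole argument hinges on recognizing that convexity of $C$ is exactly what licenses the admissible perturbation $h_t\in C$, and that one should perturb toward $h$ (a one-sided variation, since $t$ is confined to $[0,1]$) rather than in both directions. The one-sided nature is why we get an inequality rather than an equality, and it is important that the $t^2$ term be collected cleanly so that dividing by $t$ and taking the limit is unambiguous. No compactness, completeness, or separation machinery is needed here—the existence and uniqueness of $\pi_C(y)$ are already supplied by Lemma \ref{lem:A1}—so the proof is short once the perturbation is set up correctly.
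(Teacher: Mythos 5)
Your proof is correct and follows essentially the same route as the paper: both arguments perturb $\pi_C(y)$ along the segment toward $h$ inside $C$, expand $\|y-h_t\|_\mathcal{H}^2$, cancel the common term, divide by $t>0$, and let $t\downarrow 0$. No issues.
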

\begin{proof}
Now we show Lemma \ref{lem:inn-prod}. Note that $\pi_C(y) \in C$. Fix $h_0 \in C$ and for $t\in (0,1)$, define $\gamma(t)=(1-t)\pi_C(y) + t h_0\in C$. Then we have
\[
\|y-\pi_C(y)\|_\mathcal{H}^2 \leq \|y-\gamma(t)\|_\mathcal{H}^2 = \|y-\pi_C(y)\|_\mathcal{H}^2 - 2t\langle y-\pi_C(y),\, h_0 -\pi_C(y)\rangle_\mathcal{H} + t^2\|h_0-\pi_C(y)\|_\mathcal{H}^2.
\]
This yields $0 \leq - 2t\langle y-\pi_C(y),\, h_0 -\pi_C(y)\rangle_\mathcal{H} + t^2\|h_0-\pi_C(y)\|_\mathcal{H}^2$. Recall $t >0$. Therefore, we have $0 \le -2\langle y-\pi_C(y),\, h_0-\pi_C(y)\rangle_\mathcal{H} + t\|h_0-\pi_C(y)\|_\mathcal{H}^2$. Letting $t\downarrow0$ yields $\langle y-\pi_C(y),\, h_0-\pi_C(y)\rangle_\mathcal{H} \leq 0$.
\end{proof}

Hereafter, we summarize some results on compact convex sets and their support functions used in the main text.

\begin{lemma}\label{lem:sf-conti}
If $F \subset \mathbb{R}^d$ is a  bounded set (i.e., $\sup_{f \in F}\|f\|<\infty$), then the support function $s(p, F)$, $p \in \mathbb{S}^{d-1}$ is Lipschitz continuous and uniformly continuous on $\mathbb{S}^{d-1}$. 
\end{lemma}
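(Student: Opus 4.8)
The plan is to prove Lipschitz continuity directly from the definition of the support function, after which uniform continuity is immediate. Set $B = \sup_{f \in F}\|f\| < \infty$, which is finite by the boundedness hypothesis. The core of the argument is a one-line estimate: for any $u, v \in \mathbb{S}^{d-1}$ and any $f \in F$, I would split $\langle u, f\rangle = \langle v, f\rangle + \langle u - v, f\rangle$ and apply the Cauchy--Schwarz inequality to the second term, giving $\langle u, f\rangle \leq \langle v, f\rangle + \|u-v\|\,\|f\| \leq s(v,F) + B\|u-v\|$.

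Taking the supremum over $f \in F$ on the left yields $s(u,F) \leq s(v,F) + B\|u-v\|$. Interchanging the roles of $u$ and $v$ gives the reverse inequality, and combining the two produces
\[
|s(u,F) - s(v,F)| \leq B\,\|u-v\| \quad \text{for all } u, v \in \mathbb{S}^{d-1},
\]
which is exactly Lipschitz continuity with constant $B$ (matching the bound invoked in the proof of Lemma \ref{lem:d} (ii)).

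To conclude uniform continuity, I would simply observe that any Lipschitz function is uniformly continuous: given $\varepsilon > 0$, the choice $\delta = \varepsilon / B$ works when $B > 0$, and the degenerate case $B = 0$ (where $F \subseteq \{0\}$ and $s(\cdot,F) \equiv 0$ is constant) is handled trivially. Compactness of $\mathbb{S}^{d-1}$ is not even needed for this step, though it would give an alternative route.

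There is essentially no serious obstacle here; the only point requiring a moment's care is the supremum-preserving step, namely that subadditivity of the supremum ($\sup_f(a_f + c) = (\sup_f a_f) + c$ for a constant $c$, and $\sup_f(a_f + b_f) \leq \sup_f a_f + \sup_f b_f$) legitimately transfers the pointwise bound to the support functions, and the trivial bookkeeping of the edge case $B = 0$ to ensure the Lipschitz constant is well defined.
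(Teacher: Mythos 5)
Your proof is correct and follows essentially the same route as the paper's: the same decomposition $\langle u,f\rangle - \langle v,f\rangle = \langle u-v,f\rangle$ bounded via Cauchy--Schwarz by $B\|u-v\|$, followed by taking suprema and symmetrizing in $u,v$. The extra remarks on the $B=0$ edge case and the supremum-preserving step are fine but not needed beyond what the paper already does.
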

\begin{proof}
For any $u,v \in \mathbb{R}^d$ and any $f \in F$, we have $\langle u, f \rangle - \langle v, f \rangle \leq \langle u-v, f \rangle  \leq (\sup_{f \in F}\|f\|)\|u - v\|$. Then we have $s(u,F) - s(v,F)  \leq (\sup_{f \in F}\|f\|)\|u - v\|$. Likewise, we have $s(v,F) - s(u,F) \leq (\sup_{f \in F}\|f\|)\|u - v\|$, which implies $|s(u,F) - s(v,F)| \leq (\sup_{f \in F}\|f\|)\|u - v\|$, and the conclusion follows.
\end{proof}

\begin{lemma}\label{lem:cc-set-rep}
Let $s(\cdot, F)$ be the support function of $F\in K_{kc}(\mathbb{R}^d)$. Then we have $F = \bigcap_{p \in \mathbb{S}^{d-1}}\{x \in \mathbb{R}^d: \langle p, x \rangle \leq s(p, F)\}=: F_S$. 
\end{lemma}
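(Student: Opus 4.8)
The plan is to establish the set equality $F = F_S$ by proving the two inclusions $F \subseteq F_S$ and $F_S \subseteq F$ separately. The inclusion $F \subseteq F_S$ is immediate from the definition of the support function: for any $x \in F$ and any $p \in \mathbb{S}^{d-1}$ we have $\langle p, x \rangle \leq \sup_{f \in F}\langle p, f \rangle = s(p, F)$, so $x$ satisfies every defining inequality of $F_S$ and thus $x \in F_S$.

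The reverse inclusion $F_S \subseteq F$ is the substantive part, and I would prove it by contraposition: given $x_0 \notin F$, I would exhibit a direction $p_0 \in \mathbb{S}^{d-1}$ with $\langle p_0, x_0\rangle > s(p_0, F)$, which shows $x_0 \notin F_S$. Since $F \in K_{kc}(\mathbb{R}^d)$ is a nonempty closed convex subset of the Hilbert space $\mathbb{R}^d$, Lemma \ref{lem:A1} guarantees that the metric projection $q := \pi_F(x_0) = \mathrm{argmin}_{h \in F}\|x_0 - h\|$ exists and is unique, and because $F$ is closed with $x_0 \notin F$ we have $x_0 \neq q$. Set $p_0 = (x_0 - q)/\|x_0 - q\| \in \mathbb{S}^{d-1}$. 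Lemma \ref{lem:inn-prod}, applied with $\mathcal{H}=\mathbb{R}^d$, $C=F$, and $y = x_0$, yields $\langle x_0 - q,\, h - q\rangle \leq 0$ for all $h \in F$, which rearranges to $\langle p_0, h \rangle \leq \langle p_0, q\rangle$ for every $h \in F$; taking the supremum over $h \in F$ gives $s(p_0, F) \leq \langle p_0, q\rangle$.

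It then remains to compare this with $\langle p_0, x_0\rangle$. Writing $x_0 = q + \|x_0 - q\|\,p_0$, I obtain $\langle p_0, x_0\rangle = \langle p_0, q\rangle + \|x_0 - q\| > \langle p_0, q\rangle \geq s(p_0, F)$, so $x_0$ violates the defining inequality of $F_S$ for the direction $p_0$, i.e. $x_0 \notin F_S$. Combining the two inclusions yields $F = F_S$.

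The main obstacle is the reverse inclusion, which is precisely the statement that a closed convex set equals the intersection of the supporting half-spaces it determines, i.e. a separation result. The crux is producing a single unit vector $p_0$ that strictly separates $x_0$ from $F$; rather than invoking the Hahn--Banach separating-hyperplane theorem abstractly, I would generate $p_0$ concretely as the normalized displacement from $x_0$ to its nearest point in $F$, so that the variational inequality of Lemma \ref{lem:inn-prod} performs the separation. This keeps the argument self-contained within the Hilbert-space lemmas already established. Note that compactness of $F$ is not essential for this argument (closedness and convexity suffice for the existence of the projection and for the separation), while finiteness of $s(\cdot, F)$ is automatic since $F$ is bounded.
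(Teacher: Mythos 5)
Your proof is correct, and while the first inclusion $F \subseteq F_S$ is handled identically to the paper, your argument for the substantive inclusion $F_S \subseteq F$ takes a genuinely different route. The paper proceeds by contraposition as you do, but it invokes the Hahn--Banach separation theorem abstractly: for $f_0 \notin F$ it obtains $u \in \mathbb{R}^d$ and $a \in \mathbb{R}$ with $\langle u, f\rangle \leq a < \langle u, f_0\rangle$ for all $f \in F$, whence $s(u,F) < \langle u, f_0\rangle$. You instead construct the separating direction explicitly as $p_0 = (x_0 - \pi_F(x_0))/\|x_0 - \pi_F(x_0)\|$ and let the variational inequality of Lemma \ref{lem:inn-prod} do the separation, with Lemma \ref{lem:A1} supplying the projection. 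Your version buys two things: it is self-contained within the Hilbert-space lemmas the paper has already proved (so no external citation is needed), and it directly produces a \emph{unit} vector $p_0 \in \mathbb{S}^{d-1}$, which matters because $F_S$ is an intersection over $\mathbb{S}^{d-1}$ only --- the paper's proof leaves implicit the small step that the Hahn--Banach vector $u$ is nonzero (strict separation forces this) and can be normalized by positive homogeneity of both sides of the inequality. The paper's version is shorter and works verbatim in settings where a nearest-point projection is less convenient, but for $\mathbb{R}^d$ your concrete construction is arguably cleaner. One minor caution: Lemma \ref{lem:A1} as proved in the paper extracts a norm-convergent subsequence from a bounded sequence, which is legitimate here only because you apply it in the finite-dimensional space $\mathbb{R}^d$; your explicit note that you are taking $\mathcal{H} = \mathbb{R}^d$ covers this.
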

\begin{proof}
First, we show that if $f \in F$, then $f \in  F_S$. For any $f \in F$ and $p \in \mathbb{S}^{d-1}$, we have $\langle p, f \rangle \leq \sup_{f \in F}\langle p, f\rangle = s(p,F)$, which implies $f \in F_S$ and hence $F \subset F_S$. Next, we show that if $f \in F_S$, then $f \in F$. Suppose $f_0 \notin F$. Then by Hahn-Banach separation theorem, there exist $u \in \mathbb{R}^d$ and $a \in \mathbb{R}$ such that $\langle u, f\rangle \leq a < \langle u, f_0 \rangle$ for any $f \in F$. Thus, we have $s(u,F) < \langle u, f_0 \rangle$ and hence $f_0 \notin F_S$. Therefore, we obtain $F_S \subset F$, and the conclusion follows.  
\end{proof}

\begin{lemma}\label{lem:sf-add-conv}
The following results hold true. 
\begin{itemize}
\item[(i)] (Positive homogeneity) For any $p \in \mathbb{R}^d$, $t \geq 0$, and subset $F \subset \mathbb{R}^d$,  
\[
s(tp, F) = ts(p,F),\quad s(p, t F) = ts(p,F). 
\] 
\item[(ii)] (Additivity) For any $p \in \mathbb{R}^d$ and subsets $F, G \subset \mathbb{R}^d$, 
\[
s(p, F \oplus G) = s(p, F) + s(p, G).
\] 
\item[(iii)] (Convexity) For any $p \in \mathbb{R}^d$, $t \in [0,1]$, and subsets $F, G \subset \mathbb{R}^d$,
\[
s(p, (t F) \oplus ((1-t) G)) = ts(p,F) + (1-t)s(p,G).
\] 
\end{itemize}
\end{lemma}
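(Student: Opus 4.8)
The plan is to derive all three identities directly from the definition $s(p,F)=\sup_{f\in F}\langle p,f\rangle$, reducing each claim to an elementary manipulation of suprema; no deep machinery is needed here, so I expect no genuine obstacle, only the bookkeeping of two routine facts about suprema.

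First I would prove the two positive-homogeneity identities in (i). For the homogeneity in the first argument, I would use linearity of the inner product in its first slot together with $t\ge 0$ to pull the scalar outside the supremum: $s(tp,F)=\sup_{f\in F}\langle tp,f\rangle=\sup_{f\in F}t\langle p,f\rangle=t\sup_{f\in F}\langle p,f\rangle=t\,s(p,F)$, where the third equality is valid precisely because $t\ge 0$, so that multiplication by $t$ is order preserving and commutes with the supremum. For the homogeneity in the set argument, I would reparametrize the feasible set: every element of $tF$ has the form $tf$ with $f\in F$, so $s(p,tF)=\sup_{f\in F}\langle p,tf\rangle=t\sup_{f\in F}\langle p,f\rangle=t\,s(p,F)$, again using $t\ge 0$.

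Next I would establish the additivity in (ii). Writing a generic point of $F\oplus G$ as $f+g$ with $f\in F$ and $g\in G$, I would expand $s(p,F\oplus G)=\sup_{f\in F,\,g\in G}\bigl(\langle p,f\rangle+\langle p,g\rangle\bigr)$. The one step that deserves a word of justification, and the only place I would slow down, is that a supremum of a sum whose two summands depend on independent variables splits as the sum of the two separate suprema, i.e. $\sup_{f,g}\bigl(a(f)+b(g)\bigr)=\sup_f a(f)+\sup_g b(g)$; this uses that $F$ and $G$ are nonempty, so that each supremum can be approached along its own coordinate. Applying this identity yields $s(p,F\oplus G)=s(p,F)+s(p,G)$.

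Finally, (iii) requires no new argument: combining the additivity of (ii) with the set-homogeneity of (i) gives $s(p,(tF)\oplus((1-t)G))=s(p,tF)+s(p,(1-t)G)=t\,s(p,F)+(1-t)s(p,G)$, which is valid for $t\in[0,1]$ since both $t$ and $1-t$ are then nonnegative, as required for the two applications of (i). Thus all three parts follow, and the only mild subtlety throughout is the splitting of the joint supremum in (ii).
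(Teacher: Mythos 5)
Your proof is correct and follows essentially the same route as the paper's: part (i) by pulling the nonnegative scalar through the supremum, part (ii) by splitting the supremum of $\langle p,f\rangle+\langle p,g\rangle$ over $F\times G$ (the paper just makes your ``routine fact'' explicit via the two inequalities and an $\varepsilon$-argument), and part (iii) by combining (i) and (ii). No substantive difference.
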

\begin{proof}
(i) We only show the first result since the second result can be shown in the same manner. Observe that for any $f \in F$, $\langle tp, f\rangle = t\langle p, f\rangle$. Then we have $s(pt, F) = ts(p,F)$. 

\noindent
(ii) 
We first show $s(p, F \oplus G) \leq s(p,F) + s(p,G)$. Observe that for any $f \in F$ and $g \in G$, 
\begin{align*}
\langle p, f + g \rangle 
&= \langle p, f\rangle + \langle p, g \rangle
= \sup_{f \in F}\langle p, f \rangle + \sup_{g \in G}\langle p, g\rangle = s(p,F) + s(p,G). 
\end{align*}
Then we have $s(p, F \oplus G) = \sup_{h \in F \oplus G}\langle p, h \rangle \leq s(p,F) + s(p,G)$. Next we show $s(p, F) + s(p,G) \leq s(p, F \oplus G)$. If $s(p, F)=-\infty$ or $s(p,G) = -\infty$, then the inequality follows immediately. Now we assume that $s(p,F)>-\infty$ and $s(p,G)>-\infty$. Fix any positive $\varepsilon$. From the definition of the supremum, there exist $f_\varepsilon \in F$ and $g_\varepsilon \in G$ such that $\langle p, f_\varepsilon \rangle  \geq s(p, F) - \varepsilon$ and $\langle p, g_\varepsilon \rangle  \geq s(p, G)-\varepsilon$. Hence we have $\langle p, f_\varepsilon + g_\varepsilon \rangle  \geq s(p, F) + s(p, G) - 2\varepsilon$. This yields $s(p, F\oplus G)=\sup_{h \in F \oplus G}\langle p, h \rangle \geq s(p, F) + s(p, G) - 2\varepsilon$. Letting $\varepsilon \downarrow 0$, we obtain $s(p, F) + s(p,G) \leq s(p, F \oplus G)$. 

\noindent
(iii) The result follows from (i) and (ii). 
\end{proof}

\begin{lemma}\label{lem:sf-ineq}
For $F,G \in K_{kc}(\mathbb{R}^d)$, $s(p,F)\leq s(p,G)$ for all $p \in \mathbb{S}^{d-1}$ if and only if $F \subset G$. 
\end{lemma}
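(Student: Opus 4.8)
The statement is an equivalence, so I would prove the two implications separately. The ``if'' direction ($F \subseteq G \Rightarrow s(p,F) \leq s(p,G)$) is the elementary one and I would dispatch it first via monotonicity of the supremum: if $F \subseteq G$, then for every $p \in \mathbb{S}^{d-1}$ the supremum defining $s(p,F)=\sup_{f \in F}\langle p,f\rangle$ is taken over a subset of the feasible points for $s(p,G)=\sup_{g \in G}\langle p,g\rangle$, hence $s(p,F) \leq s(p,G)$. No convexity or closedness is needed here, only the set inclusion.

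For the ``only if'' direction ($s(p,F) \leq s(p,G)$ for all $p \in \mathbb{S}^{d-1} \Rightarrow F \subseteq G$), the plan is to invoke the representation of a compact convex set as the intersection of its supporting half-spaces, namely Lemma \ref{lem:cc-set-rep}, which gives $G = \bigcap_{p \in \mathbb{S}^{d-1}}\{x \in \mathbb{R}^d : \langle p, x\rangle \leq s(p,G)\}$. I would take an arbitrary $f \in F$ and show it lies in every half-space on the right. Indeed, for each $p \in \mathbb{S}^{d-1}$ we have $\langle p, f\rangle \leq \sup_{f' \in F}\langle p, f'\rangle = s(p,F)$, and by hypothesis $s(p,F) \leq s(p,G)$, so $\langle p,f\rangle \leq s(p,G)$. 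Since this holds for all $p$, the point $f$ belongs to the intersection, i.e.\ $f \in G$ by Lemma \ref{lem:cc-set-rep}. As $f \in F$ was arbitrary, $F \subseteq G$ follows.

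I do not anticipate a genuine obstacle: the content of the reverse implication is entirely carried by Lemma \ref{lem:cc-set-rep}, which is where convexity and closedness of $G$ enter (the representation fails for nonconvex sets). The only point requiring minor care is the restriction of $p$ to the unit sphere $\mathbb{S}^{d-1}$ rather than all of $\mathbb{R}^d$; but this is harmless because of positive homogeneity of the support function in $p$ (Lemma \ref{lem:sf-add-conv} (i)), so the pointwise inequality on $\mathbb{S}^{d-1}$ is equivalent to the inequality on all of $\mathbb{R}^d$, and Lemma \ref{lem:cc-set-rep} is already phrased over $\mathbb{S}^{d-1}$. This matches the formulation used elsewhere in the paper (e.g.\ in the proof of Proposition \ref{prp:F-Reg-solution}).
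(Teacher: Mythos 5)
Your proposal is correct and follows essentially the same route as the paper: the forward direction by monotonicity of the supremum, and the reverse direction via the half-space representation of Lemma \ref{lem:cc-set-rep} (the paper applies the representation to both $F$ and $G$ and compares half-spaces, while you apply it only to $G$ and check membership of each $f\in F$ directly — an immaterial difference).
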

\begin{proof}
From the definition of the support function, the verification of $F \subset G \Rightarrow s(p,F)\leq s(p,G)$ for all $p \in \mathbb{S}^{d-1}$ is straightforward. Now we verify $s(p,F)\leq s(p,G)$ for all $p \in \mathbb{S}^{d-1}$ $\Rightarrow F \subset G$. From Lemma \ref{lem:cc-set-rep}, we have $K = \bigcap_{p \in \mathbb{S}^{d-1}}\{x \in \mathbb{R}^d: \langle p,x \rangle \leq s(p,K)\}$ for $K \in \{F,G\}$. Since $s(p,F)\leq s(p,G)$ for all $p \in \mathbb{S}^{d-1}$, we obtain $\{x \in \mathbb{R}^{d}: \langle p, x\rangle\leq s(p,F)\} \subset \{x \in \mathbb{R}^{d}: \langle p, x\rangle\leq s(p,G)\}$ for all $p \in \mathbb{S}^{d-1}$. This yields $F \subset G$.  
\end{proof}

\begin{lemma}\label{lem:sf-cc-unique}
Let $s(\cdot, F)$ and $s(\cdot, G)$ be the support functions of $F,G \in K_{kc}(\mathbb{R}^d)$, respectively. Then  $F = G$ if and only if $s(p, F)=s(p, G)$ for $p \in \mathbb{S}^{d-1}$. 
\end{lemma}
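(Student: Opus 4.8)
The plan is to prove the two implications separately, with essentially all of the work deferred to the earlier lemmas. For the forward direction, $F = G \Rightarrow s(p,F) = s(p,G)$ for all $p \in \mathbb{S}^{d-1}$, I would simply note that the support function $s(p, \cdot) = \sup_{f \in \cdot}\langle p, f\rangle$ depends only on the underlying set, so equal sets yield identical support functions. This requires no argument beyond the definition.

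For the converse, the plan is to reduce the equality of the two sets to the inclusion characterization already established in Lemma \ref{lem:sf-ineq} and apply it symmetrically. Assuming $s(p, F) = s(p, G)$ for all $p \in \mathbb{S}^{d-1}$, one has in particular $s(p,F) \leq s(p,G)$ for every $p$, so Lemma \ref{lem:sf-ineq} gives $F \subset G$; reversing the roles, $s(p,G) \leq s(p,F)$ for every $p$ yields $G \subset F$. Combining the two inclusions gives $F = G$. Equivalently, I could invoke Lemma \ref{lem:cc-set-rep} directly: since $F$ and $G$ lie in $K_{kc}(\mathbb{R}^d)$, each equals the intersection of half-spaces $\bigcap_{p \in \mathbb{S}^{d-1}}\{x : \langle p, x\rangle \leq s(p, \cdot)\}$ determined by its support function, and coinciding support functions produce identical intersections.

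I do not expect a genuine obstacle here, since the substantive content—the Hahn–Banach separation step—is already packaged inside Lemmas \ref{lem:cc-set-rep} and \ref{lem:sf-ineq}. The only point worth flagging is that the conclusion relies essentially on both sets being closed and convex, since a set and its closed convex hull share the same support function; this is ensured by the hypothesis $F, G \in K_{kc}(\mathbb{R}^d)$, so no extra care is needed beyond citing the appropriate prior lemma.
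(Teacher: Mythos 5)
Your proposal is correct and matches the paper's proof: the paper likewise dismisses the forward direction as obvious and derives the converse by applying Lemma \ref{lem:sf-ineq} in both directions to obtain $F \subset G$ and $G \subset F$. Your alternative route via Lemma \ref{lem:cc-set-rep} is a minor variation on the same idea and is equally valid.
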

\begin{proof}
We only verify $s(p,F)= s(p,G)$ for all $p \in \mathbb{S}^{d-1}$ $\Rightarrow F=G$ since the converse is obvious. Note that $s(p,F)= s(p,G)$ for all $p \in \mathbb{S}^{d-1}$ $\Leftrightarrow s(p, F)\leq s(p,G)$ and $s(p,F) \geq s(p,G)$ for all $p \in \mathbb{S}^{d-1}$. Then Lemma \ref{lem:sf-ineq} yields $F \subset G$ and $F \supset G$, which implies $F = G$. 
\end{proof}

\begin{lemma}\label{lem:Hd-sf-conv}
For any $F,G \in K_{kc}(\mathbb{R}^d)$, we have $d_H(F, G) = \sup_{p \in \mathbb{S}^{d-1}}|s(p,F) - s(p,G)|$. 
\end{lemma}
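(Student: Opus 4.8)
The plan is to characterize both sides of the claimed identity through the Minkowski dilation by a ball, and then reduce everything to the support-function comparison provided by Lemma \ref{lem:sf-ineq}. Write $\bar B = \{x\in\mathbb{R}^d:\|x\|\le 1\}$ for the closed unit ball and set $\rho := \sup_{p\in\mathbb{S}^{d-1}}|s(p,F)-s(p,G)|$; note that $\rho$ is finite and attained, since $s(\cdot,F)$ and $s(\cdot,G)$ are continuous on the compact sphere $\mathbb{S}^{d-1}$ (Lemma \ref{lem:sf-conti}). The goal is to show $d_H(F,G)=\rho$.

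First I would record the geometric reformulation of the Hausdorff metric. For $\varepsilon\ge 0$, the one-sided quantity satisfies $\sup_{f\in F}\inf_{g\in G}\|f-g\|\le\varepsilon$ if and only if every $f\in F$ lies within distance $\varepsilon$ of $G$, i.e. $f\in G\oplus\varepsilon\bar B$; hence $\sup_{f\in F}\inf_{g\in G}\|f-g\|\le\varepsilon$ is equivalent to $F\subseteq G\oplus\varepsilon\bar B$, using that $G$ is compact so the nearest point is attained and $G\oplus\varepsilon\bar B$ is closed. Applying this in both directions yields
\[
d_H(F,G)=\inf\{\varepsilon\ge 0: F\subseteq G\oplus\varepsilon\bar B \text{ and } G\subseteq F\oplus\varepsilon\bar B\}.
\]
Since $F,G$ are compact convex and $\varepsilon\bar B$ is compact convex, the Minkowski sums $G\oplus\varepsilon\bar B$ and $F\oplus\varepsilon\bar B$ again lie in $K_{kc}(\mathbb{R}^d)$, so Lemma \ref{lem:sf-ineq} applies to these inclusions.

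Next I would translate the inclusions into support-function inequalities. Using positive homogeneity and additivity (Lemma \ref{lem:sf-add-conv} (i),(ii)) together with $s(p,\bar B)=\sup_{\|x\|\le 1}\langle p,x\rangle=\|p\|=1$ for $p\in\mathbb{S}^{d-1}$, one gets $s(p,G\oplus\varepsilon\bar B)=s(p,G)+\varepsilon$ for all $p\in\mathbb{S}^{d-1}$, and likewise for $F$. Then Lemma \ref{lem:sf-ineq} gives that $F\subseteq G\oplus\varepsilon\bar B$ holds iff $s(p,F)-s(p,G)\le\varepsilon$ for all $p$, and $G\subseteq F\oplus\varepsilon\bar B$ holds iff $s(p,G)-s(p,F)\le\varepsilon$ for all $p$. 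Both inclusions hold simultaneously precisely when $|s(p,F)-s(p,G)|\le\varepsilon$ for every $p$, that is, when $\rho\le\varepsilon$. Consequently the set of admissible $\varepsilon$ equals $[\rho,\infty)$, and taking the infimum yields $d_H(F,G)=\rho$, which is the claim.

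The main obstacle I anticipate is the careful justification of the two preparatory facts rather than the final algebra: namely, that $\sup_{f\in F}\inf_{g\in G}\|f-g\|\le\varepsilon$ is genuinely equivalent to the inclusion $F\subseteq G\oplus\varepsilon\bar B$ (which uses compactness of $G$ to attain the nearest point and closedness of the dilation), and that the Minkowski sums remain within $K_{kc}(\mathbb{R}^d)$ so that Lemma \ref{lem:sf-ineq} is applicable. Once these are settled, the identity $s(p,\bar B)=1$ and the additivity of support functions make the reduction to $|s(p,F)-s(p,G)|\le\varepsilon$ immediate, and the equality $d_H(F,G)=\rho$ follows by minimizing over admissible $\varepsilon$.
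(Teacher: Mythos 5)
Your proof is correct and rests on the same ingredients as the paper's: the dilation $G\oplus\varepsilon\bar B$, additivity and homogeneity of support functions (Lemma \ref{lem:sf-add-conv}), and the support-function characterization of inclusion (Lemma \ref{lem:sf-ineq}, itself a consequence of Lemma \ref{lem:cc-set-rep}, which is what the paper invokes). The only difference is cosmetic: you run both inclusions symmetrically through the infimum characterization of $d_H$, whereas the paper proves the bound $\sup_p|s(p,F)-s(p,G)|\le d_H(F,G)$ by a direct Cauchy--Schwarz estimate on near-minimizing selections and reserves the dilation argument for the reverse inequality.
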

\begin{proof}
We first show $\sup_{p \in \mathbb{S}^{d-1}}|s(p,F) - s(p,G)|\leq d_H(F,G)$. Let $B = \{x \in \mathbb{R}^d: \|x\|\leq 1\}$. Fix any positive $\varepsilon$ and any $f \in F$. From the definition of $d_H$, there exists $g_{f, \varepsilon} \in G$ such that $\|f - g_{f,\varepsilon}\| \leq d_H(F,G) + \varepsilon$. Hence for any $p \in \mathbb{S}^{d-1}$, we have 
\begin{align*}
\langle p, f\rangle 
&= \langle p, g_{f,\varepsilon}\rangle + \langle p, f-g_{f,\varepsilon} \rangle 
\leq \langle p, g_{f,\varepsilon}\rangle + \|p\|\|f - g_{f,\varepsilon}\|\\
&\leq \langle p, g_{f,\varepsilon}\rangle + d_H(F,G)+\varepsilon
\leq s(p, G) + d_H(F,G) + \varepsilon. 
\end{align*}
This yields $s(p,F) \leq s(p,G) + d_H(F,G) + \varepsilon$. Letting $\varepsilon \downarrow 0$, we have $s(p,F) \leq s(p,G) + d_H(F,G)$. Likewise, we have $s(p,G) \leq s(p,F) + d_H(F,G)$. Then we have $|s(p,F) - s(p,G)| \leq d_H(F,G)$, which yields $\sup_{p \in \mathbb{S}^{d-1}}|s(p,F) - s(p,G)|\leq d_H(F,G)$. 

Next we show  $d_H(F,G) \leq \sup_{p \in \mathbb{S}^{d-1}}|s(p,F) - s(p,G)|$. Let $\delta = \sup_{p \in \mathbb{S}^{d-1}}|s(p,F) - s(p,G)|$. Then for any $p \in \mathbb{S}^{d-1}$ and any $f \in F$, we have $\langle p, f \rangle \leq s(p,F) \leq s(p,G) + \delta$. This yields
\begin{align*}
F &\subset \bigcap_{p \in \mathbb{S}^{d-1}}\left\{x \in \mathbb{R}^d: \langle p,x\rangle \leq s(p, G) + \delta \right\} \\
&= \bigcap_{p \in \mathbb{S}^{d-1}}\left\{x \in \mathbb{R}^d: \langle p,x\rangle \leq s(p, G \oplus (\delta B)) \right\}\ (\text{Lemma \ref{lem:sf-add-conv} (i), (ii)})\\ 
&= G \oplus (\delta B)\ (\text{Lemma \ref{lem:cc-set-rep}})
\end{align*}
and hence for any $f \in F$, it holds $\inf_{g \in G}\|f - g\| \leq \delta$. Likewise, we have $G \subset F \oplus (\delta B)$ and thus $\inf_{f \in F}\|g - f\| \leq \delta$ for any $g \in G$. Therefore, we have $d_H(F,G) \leq \delta = \sup_{p \in \mathbb{S}^{d-1}}|s(p,F) - s(p,G)|$, and the conclusion follows.
\end{proof}

\begin{lemma}[Theorem 3 in \cite{Vi85}]\label{lem:bcc-set-conv}
Let $\{F_n\}_{n \geq 1} \subset  K_{kc}(\mathbb{R}^d)$ be a sequence of compact convex sets. 
If $\lim_{n \to \infty}d_H(F_n, F) =0$, then $F \subset \mathbb{R}^d$ is a compact convex set.
\end{lemma}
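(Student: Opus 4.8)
The plan is to establish four properties of the limit set $F$ in turn: (i) $F \neq \emptyset$, (ii) $F$ is bounded, (iii) $F$ is closed, and (iv) $F$ is convex. Properties (ii) and (iii) together give that $F$ is compact in $\mathbb{R}^d$, so the substantive task is (iv), the only step that actually uses convexity of the $F_n$. Throughout I would write $\delta_n = d_H(F_n, F)$, so that $\delta_n \to 0$ by hypothesis, and set $\mathbb{B} = \{x \in \mathbb{R}^d : \|x\| \leq 1\}$. The key consequence of the definition of $d_H$ that I would record at the outset is that $\delta_n \geq \sup_{f \in F}\inf_{g \in F_n}\|f - g\|$ and $\delta_n \geq \sup_{g \in F_n}\inf_{f \in F}\|g - f\|$, which translate into the two inclusions $F \subseteq F_n \oplus \delta_n \mathbb{B}$ and $F_n \subseteq F \oplus \delta_n \mathbb{B}$ for every $n$.

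For (i), I observe that convergence in $d_H$ forces $F \neq \emptyset$, since $d_H(G, \emptyset)$ is not finite for nonempty $G$. For (ii), I would choose $N$ with $\delta_N < 1$, so that $F \subseteq F_N \oplus \delta_N \mathbb{B}$; boundedness of the compact set $F_N$ then yields boundedness of $F$. For (iii), since $d_H(F_n, F) = d_H(F_n, \overline{F})$, one may assume $F$ is closed (equivalently, I would invoke completeness of the space of nonempty compact subsets of $\mathbb{R}^d$ under $d_H$, which guarantees that the $d_H$-Cauchy sequence $\{F_n\}$ has a nonempty compact limit, necessarily coinciding with $F$). Combined with (ii), this shows $F$ is compact.

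The main step is (iv). I would fix $x, y \in F$ and $t \in [0,1]$ and aim to show $tx + (1-t)y \in F$. Using $F \subseteq F_n \oplus \delta_n \mathbb{B}$ and compactness of $F_n$, I pick $x_n, y_n \in F_n$ with $\|x - x_n\| \leq \delta_n$ and $\|y - y_n\| \leq \delta_n$. Because $F_n$ is convex, $z_n := t x_n + (1-t) y_n \in F_n$, and the triangle inequality gives $\|z_n - (tx + (1-t)y)\| \leq t\delta_n + (1-t)\delta_n = \delta_n$. I then push $z_n$ back using $F_n \subseteq F \oplus \delta_n \mathbb{B}$ and compactness of $F$ to obtain $w_n \in F$ with $\|z_n - w_n\| \leq \delta_n$, whence
\[
\|w_n - (tx + (1-t)y)\| \leq \|w_n - z_n\| + \|z_n - (tx + (1-t)y)\| \leq 2\delta_n \to 0.
\]
Thus $w_n \to tx + (1-t)y$ with $w_n \in F$, and closedness of $F$ from (iii) gives $tx + (1-t)y \in F$, establishing convexity.

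The main obstacle I anticipate lies in (iv): the argument requires \emph{both} inclusions $F \subseteq F_n \oplus \delta_n \mathbb{B}$ and $F_n \subseteq F \oplus \delta_n \mathbb{B}$, that is, both directions of the Hausdorff distance. The first transports the endpoints $x, y$ into the convex set $F_n$ so that their convex combination is admissible, while the second transports the resulting point back into $F$. Convexity of each $F_n$ is precisely what makes $z_n \in F_n$, and closedness of $F$ is what upgrades the convergence $w_n \to tx + (1-t)y$ to genuine membership in $F$; recognizing that these three ingredients must be combined in exactly this order is the crux of the proof.
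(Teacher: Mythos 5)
Your argument is correct, but note that the paper does not prove this lemma at all: it is imported verbatim as Theorem 3 of the cited reference (Vitale, 1985), so there is no internal proof to compare against. Your self-contained route --- splitting the claim into nonemptiness, boundedness, closedness, and convexity, and deriving the two inclusions $F \subseteq F_n \oplus \delta_n \mathbb{B}$ and $F_n \subseteq F \oplus \delta_n \mathbb{B}$ from $d_H(F_n,F)=\delta_n$ --- is the standard elementary proof, and the convexity step (transport $x,y$ into $F_n$, take the convex combination there, transport back, and use closedness of $F$) is exactly the right mechanism; what your proof buys over the paper's bare citation is that it makes explicit where convexity of the $F_n$ and compactness of the approximants are actually used. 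One point deserves more care than you give it: as literally stated the lemma is false for arbitrary $F \subset \mathbb{R}^d$ (an open ball is at Hausdorff distance zero from its closure), so closedness of $F$ cannot be \emph{derived} --- it must be read into the hypothesis, e.g.\ $F \in K(\mathbb{R}^d)$, which is the ambient space the paper works in. Your step (iii) gestures at this ("one may assume $F$ is closed") but slightly conflates two different fixes: either you take closedness as part of the hypothesis, or you invoke completeness of the hyperspace of nonempty compact sets under $d_H$ to produce a compact limit and then argue it coincides with $F$ --- and the latter identification again needs $F$ closed. Stating the implicit hypothesis once at the outset would remove the only soft spot in an otherwise complete argument.
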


Lemmas \ref{lem:d} (ii) and \ref{lem:bcc-set-conv} imply that $(K_{kc}(\mathbb{R}^d),d_{kc})$ (or $(K_{kc}(\mathbb{R}^d), d_H)$) is a complete metric space. 

\begin{lemma}[Lemma 2.2 in \cite{Ar74}]\label{lem:int-bdd-SVRV}
If an SVRV $F: \Omega \to K_{kc}(\mathbb{R}^d)$ is integrably bounded, then we have $\mathbb{E}[s(p, F)] = s(p, \mathbb{E}[F])$ for $p \in \mathbb{S}^{d-1}$. 
\end{lemma}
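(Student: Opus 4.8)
The plan is to fix $p \in \mathbb{S}^{d-1}$ and prove the identity by establishing the two inequalities $s(p,\mathbb{E}[F]) \le \mathbb{E}[s(p,F)]$ and $\mathbb{E}[s(p,F)] \le s(p,\mathbb{E}[F])$ separately. The first is elementary and follows directly from the selection characterization of the Aumann mean, whereas the second hinges on a measurable selection argument, which is the substantive part.

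For the first inequality, I would write $s(p,\mathbb{E}[F]) = \sup_{a \in \mathbb{E}[F]} \langle p, a\rangle = \sup_{f \in \mathcal{S}^1(F)} \langle p, \mathbb{E}[f]\rangle = \sup_{f \in \mathcal{S}^1(F)} \mathbb{E}[\langle p, f\rangle]$, using linearity of expectation and of the inner product in the last step. For each selection $f \in \mathcal{S}^1(F)$ one has the pointwise bound $\langle p, f(\omega)\rangle \le s(p, F(\omega))$ for a.e.\ $\omega$; since $F$ is integrably bounded, $s(p,F(\cdot))$ is integrable, so taking expectations and then the supremum over $f$ yields $s(p,\mathbb{E}[F]) \le \mathbb{E}[s(p,F)]$.

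For the reverse inequality, the idea is to exhibit a single selection that realizes the support almost surely. I would introduce the exposed-face multifunction $H_p(\omega) = \{f \in F(\omega): \langle p, f\rangle = s(p, F(\omega))\}$, which is nonempty and closed because $F(\omega)$ is compact, and verify that $\omega \mapsto H_p(\omega)$ is a measurable closed-valued multifunction (using that $F$ is a measurable SVRV and that $\omega \mapsto s(p,F(\omega))$ is measurable, so $H_p$ is the intersection of $F$ with a measurably moving hyperplane). A measurable selection theorem of Kuratowski--Ryll-Nardzewski type then furnishes a measurable $f^* \in \mathcal{S}(F)$ with $\langle p, f^*(\omega)\rangle = s(p, F(\omega))$ a.s. Integrable boundedness gives $\|f^*(\omega)\| \le \sup_{g \in F(\omega)} \|g\|$, whose expectation is finite, so $f^* \in \mathcal{S}^1(F)$ and $\mathbb{E}[f^*] \in \mathbb{E}[F]$. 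Hence $\mathbb{E}[s(p,F)] = \mathbb{E}[\langle p, f^*\rangle] = \langle p, \mathbb{E}[f^*]\rangle \le s(p,\mathbb{E}[F])$, and combining the two inequalities completes the argument.

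The main obstacle is the measurability of the exposed-face map $H_p$ together with the application of the selection theorem; once a measurable maximizing selection is in hand, the remaining steps are routine bookkeeping. I would address this either by appealing to the standard theory of measurable multifunctions in $\mathbb{R}^d$, or, for a self-contained treatment, by constructing $f^*$ as a measurable limit along a countable family of selections whose inner products with $p$ approximate $s(p,F(\cdot))$ arbitrarily well and then passing to the limit using compactness of $F(\omega)$.
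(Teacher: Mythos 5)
Your proposal is correct, but note that the paper does not actually prove this statement: it is imported verbatim as Lemma 2.2 of \cite{Ar74}, so there is no in-paper argument to compare against. What you have written is essentially the standard proof of that classical result. The easy direction $s(p,\mathbb{E}[F])\le\mathbb{E}[s(p,F)]$ is exactly as you say, using that each point of $\mathbb{E}[F]$ is $\mathbb{E}[f]$ for some $f\in\mathcal{S}^1(F)$ and that $\langle p,f(\omega)\rangle\le s(p,F(\omega))$ pointwise, with integrability of $s(p,F(\cdot))$ supplied by integrable boundedness. The substantive direction is, as you identify, the existence of a measurable maximizing selection $f^*$ with $\langle p,f^*(\omega)\rangle=s(p,F(\omega))$ a.s.; measurability of the exposed-face map $H_p$ and the Kuratowski--Ryll-Nardzewski theorem handle this, and your fallback via a Castaing representation (choosing, for each $\omega$, near-maximizers from a countable dense family of selections and passing to a limit using compactness of $F(\omega)$) gives a self-contained route. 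Two small points worth making explicit if you write this up: (a) measurability of $\omega\mapsto s(p,F(\omega))$ itself follows from $s(p,F(\omega))=\sup_n\langle p,f_n(\omega)\rangle$ for a Castaing representation $\{f_n\}$, and (b) no closedness of $\mathbb{E}[F]$ is needed anywhere, since the second inequality only uses $\mathbb{E}[f^*]\in\mathbb{E}[F]$ by definition of the Aumann mean. With those caveats the argument is complete.
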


\bibliography{Frechet-RS}
\bibliographystyle{apalike}

\end{document}